\newcommand{\R}{\mathbb{R}}
\newcommand{\C}{\mathbb{C}}
\newcommand{\bo}{{\bf 0}}
\newcommand{\rank}{\operatorname{rank}}
\newcommand{\sgn}{\operatorname{sgn}}
\newcommand{\Aa}{\mathcal{A}}
\newcommand{\inv}{^{-1}}
\newcommand{\codim}{\operatorname{codim}}
\newcommand{\corank}{\operatorname{corank}}
\newtheorem{theorem}{Theorem}
\newtheorem{definition}{Definition}
\newtheorem{lemma}{Lemma}
\newtheorem{prop}{Propositon}
\newtheorem{cor}{Corollary}
\newenvironment{ex}{\medskip \noindent {\bf Example.\ }}{\bigskip}
\newenvironment{proof}{\par\noindent \emph{Proof. }}{\hspace*{\fill}$\Box$\par\medskip}
\title{On polynomial mappings from the plane to the plane\thanks{%
Iwona~Krzy\.{z}anowska and Zbigniew~Szafraniec\\
University of Gda\'{n}sk,
              Institute of Mathematics \\
              80-952 Gda\'{n}sk, Wita Stwosza 57, Poland\\          
              Email: Iwona.Krzyzanowska@mat.ug.edu.pl\\
              Email: Zbigniew.Szafraniec@mat.ug.edu.pl\\ \\
2000 \emph{Mathematics Subject Classification}:  Primary 14P99; Secondary 58K05\\
\emph{Keywords}: Singularities, cusps\\\\\emph{Supported by National Science Centre, grant 6093/B/H03/2011/40}
}
}
\author{Iwona Krzy\.{z}anowska \and Zbigniew~Szafraniec}
\date{July 2012}
\begin{document}

\def\nothanksmarks{\def\thanks##1{\protect\footnotetext[0]{\kern-\bibindent##1}}}

\nothanksmarks

\maketitle
%
%\pagestyle{fancy}
%
%\lhead{\fancyplain{}{\textsc{\small I.~Krzy\.{z}anowska, Z.~Szafraniec}}}
%\rhead{\fancyplain{}{\emph{\small Polynomial mappings into a Stiefel manifold and  immersions }}}
%

\maketitle

\abstract{Let $f:\R^2\longrightarrow\R^2$ be a generic polynomial mapping. There are constructed quadratic forms whose signatures determine the number of positive and negative cusps of $f$.}

\section{Introduction}
Mappings between $2$--manifolds are a natural object of study
in the theory of singularities. Let $M,N$ be smooth surfaces,
and let $f:M\rightarrow N$ be smooth. Whitney \cite{WhitneyCusps} proved
that critical points of a generic $f$ are folds and cusps.

If $M,N$ are oriented and $p$ is a cusp point, we define $\mu(p)$
to be the local topological degree of the germ $f:(M,p)\rightarrow (N,f(p))$.

There are several results concerning relations between the topology of $M,N$
and the topology of the critical locus of $f$ (see \cite{levine}, \cite{thom}, \cite{WhitneyCusps}).
In particular, there are results concerning $\sum_p\mu(p)$, where $p$
runs through the set of cusp points (see \cite{fukudaishikawa}, \cite{quine}). 
Singularities of map germs of the plane into the plane were studied in
\cite{fukudaishikawa}, \cite{gaffneymond},  \cite{moyaballesteros},  \cite{rieger}.
For a recent account of the subject, and other related results, we refer the reader
to \cite{dutertrefukui},\cite{saeki}.

In this paper we investigate the number of cusps of 
one--generic polynomial mappings.
Let $f=(f_1,f_2):\R^2\rightarrow\R^2$ be a polynomial mapping.
Denote
$J=\frac{\partial(f_1,f_2)}{\partial(x,y)}$, $F_i=\frac{ \partial(J,f_i)}{\partial(x,y)}$, $i=1,2$.
Let $I'$ be the ideal in $\R[x,y]$ generated by 
$J$, $F_1$, $F_2$, and $\frac{ \partial(J,F_i)}{\partial(x,y)}$, $i=1,2$.
We shall show that $f$ is one--generic if $I'=\R[x,y]$ (see Proposition \ref{alg1}).

Let $I$ be the ideal in $\R[x,y]$ generated by $J$, $F_1$, $F_2$, and let $\Aa=\R[x,y]/I$.
(In the local case, ideals defined by the same three generators were introduced and investigated
in \cite{fukudaishikawa}, \cite{gaffneymond}.)
Let $u:\R^2\rightarrow\R$ be a polynomial. Denote $U=\{p\in\R^2\ |\ u(p)>0\}$.

Assume that $\dim_\R\Aa<\infty$. In Section \ref{twierdzenia} we construct four quadratic forms
$\Theta_1,\ldots,\Theta_4$ on $\Aa$. We shall prove that signatures of these forms determine
the number of positive and negative cusps in $\R^2$ and in $U$ (Theorems \ref{th2}, \ref{th3}).
These signatures  may be computed using computer algebra systems.
In Section \ref{przyklady} we present examples which  were calculated with the help of {\sc Singular}
\cite{greueletal}.

\section{Mappings of the plane into the plane}
In this section we present useful facts about mappings of the plane into the plane.
In particular we show that definitions of fold points and cusp points introduced in
\cite{golub} and in \cite{WhitneyCusps} coincide (see Theorem \ref{oMapie}). In exposition and notation,
we follow closely \cite{golub}

Let $M,N$ be smooth manifolds,  and $p\in M$. For smooth mappings $f,g:M\longrightarrow N$ with $f(p)=g(p)=q$, we say that $f$ 
has first order contact with $g$ at $p$ if $Df(p)=Dg(p)$ as mapping of $T_pM\longrightarrow T_qN$. By
 $J^1(M,N)_{(p,q)}$ we shall denote the set of equivalence classes of mappings with $f(p)=q$ having the same
first order contact at $p$. Let $$J^1(M,N)=\bigcup_{(p,q)\in M\times N} J^1(M,N)_{(p,q)}.$$
An element $\sigma\in J^1(M,N)$ is called a 1--jet from $M$ to $N$.

 Denote $\corank \sigma=\min(\dim M, \dim N)-\operatorname{rank} Df(p)$.
 Put $S_r=\{\sigma\in J^1(M,N)|\ \corank \sigma =r\}$. According to \cite[Theorem 5.4]{golub},
 $S_r$ is a submanifold of $J^1(M,N)$ with $\codim S_r=r(|\dim M-\dim N|+r)$.
Given a  smooth mapping $f:M\longrightarrow N$ there is a canonically defined mapping $j^1f:M\longrightarrow J^1(M,N)$. 
 Let  $S_r(f)=\{p\in M|\ \corank Df(p)=r\}=(j^1f)\inv (S_r)$.

\begin{definition}
We say that $f:M\longrightarrow N$ is one--generic if $j^1f$ intersects $S_r$ transversely (denoted by  $j^1f\pitchfork S_r$)  for all $r$.
\end{definition}

According to \cite[Theorem 4.4]{golub}, if $j^1f\pitchfork S_r$ then either $S_r(f)=\emptyset$ or $S_r(f)$
 is a submanifold of $M$, with $\codim S_r(f)=\codim S_r$.

Assume that $M=N=\R^2$. In that case $J^1(\R^2,\R^2)\simeq \R^2\times \R^2\times M(2,2)$, 
where $M(2,2)=\{\left[ a_{ij} \right]|\ 1\leq i,j\leq 2\}$ is the set of $2\times 2$--matrices.

Let $\phi=a_{11}a_{22}-a_{12}a_{21}:J^1(\R^2,\R^2)\longrightarrow\R$. It is easy to see that
 $S_0=\{\phi\neq 0\}$, $S_1=\{\phi=0,\ d\phi\neq 0\}$ and $S_2=\{\phi=0,\ d \phi= 0\}$. 
In particular $\phi$ is locally a submersion at points of $S_1$. Moreover $\phi\circ j^1f=J$, where $J$
 is the determinant of the Jacobian matrix $Df$, and $J\inv (0)=S_1(f)\cup S_2(f)$.

\begin{lemma}\label{oneGeneric}
A mapping $f:\R^2\longrightarrow \R^2$ is one--generic if and only if $d J(p)\neq 0 $ for all $p\in J\inv(0)$. If that is the case then $S_1(f)=J^{-1}(0)$.
\end{lemma}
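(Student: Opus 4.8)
The plan is to unfold one-genericity into the transversality conditions $j^1f \pitchfork S_r$ for $r=0,1,2$ and to treat each stratum in turn. Because $S_0=\{\phi\neq 0\}$ is open, the tangent space to $S_0$ is the whole tangent space at each of its points, so $j^1f\pitchfork S_0$ holds automatically; one-genericity thus reduces to the pair of conditions $j^1f\pitchfork S_1$ and $j^1f\pitchfork S_2$. I would then prove both implications of the equivalence by showing that each of these two transversality conditions is governed by the behaviour of $dJ$ on the corresponding part of $J\inv(0)=S_1(f)\cup S_2(f)$.

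For $S_1=\{\phi=0,\ d\phi\neq 0\}$ the key point, already recorded in the excerpt, is that $\phi$ is a submersion at every point of $S_1$, so $S_1$ is locally the regular zero level set of $\phi$. I would invoke the standard fact that a map is transverse to a regular level set precisely when its composition with the defining submersion is itself a submersion at the relevant points. Applying this to the composite $\phi\circ j^1f=J$, transversality $j^1f\pitchfork S_1$ at a point $p\in S_1(f)$ is equivalent to $J:\R^2\longrightarrow\R$ being a submersion at $p$, i.e.\ to $dJ(p)\neq 0$. Hence $j^1f\pitchfork S_1$ holds if and only if $dJ(p)\neq 0$ for every $p\in S_1(f)$.

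For $S_2$ I would use the dimension count $\codim S_2=4>2=\dim\R^2$: if $j^1f(p)\in S_2$ for some $p$, then $d(j^1f)(T_p\R^2)+T_{j^1f(p)}S_2$ has dimension at most $2+(\dim J^1-4)<\dim J^1$, so transversality must fail. Therefore $j^1f\pitchfork S_2$ forces $S_2(f)=\emptyset$. The one genuine computation is to check that $dJ$ vanishes identically on $S_2(f)$: at a corank-$2$ point $p$ one has $Df(p)=0$, so all four first-order partials of $f_1,f_2$ vanish at $p$, and differentiating $J=(f_1)_x(f_2)_y-(f_1)_y(f_2)_x$ shows that every summand of $\partial_x J$ and $\partial_y J$ retains a first-order partial as a factor, whence $dJ(p)=0$. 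Combining the three strata finishes the proof: if $f$ is one-generic then $S_2(f)=\emptyset$, so $J\inv(0)=S_1(f)$ and $dJ\neq 0$ there; conversely, if $dJ\neq 0$ on all of $J\inv(0)$, then the forced vanishing of $dJ$ at corank-$2$ points gives $S_2(f)=\emptyset$ (hence $j^1f\pitchfork S_2$ and $J\inv(0)=S_1(f)$), after which $dJ\neq 0$ on $S_1(f)$ yields $j^1f\pitchfork S_1$. In either case $S_1(f)=J\inv(0)$. I expect the main obstacle to be stating the transversality-to-level-set characterization of $S_1$ with enough care and verifying the corank-$2$ computation; the dimension argument for $S_2$ and the remaining logical bookkeeping are routine.
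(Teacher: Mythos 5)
Your proposal is correct and follows essentially the same route as the paper: transversality to $S_1$ is reduced to $J=\phi\circ j^1f$ being a submersion on $S_1(f)$ (the paper cites Lemma 4.3 of Golubitsky--Guillemin for exactly this), and the codimension count $\codim S_2=4>2$ forces $S_2(f)=\emptyset$. The only difference is that you spell out the computation showing $Df(p)=0$ implies $dJ(p)=0$, a detail the paper leaves implicit in the converse direction; this is a worthwhile clarification but not a different argument.
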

\begin{proof}As $\codim S_2=4$, then $j^1f\pitchfork S_2$ if and only if $S_2(f)=\emptyset$, i.e. $Df(p)\neq 0$ for $p\in J\inv(0)$. By \cite[Lemma 4.3]{golub}, $j^1f\pitchfork S_1$ if and only if $\phi\circ j^1f=J$ is locally a submersion at every $p\in S_1(f)$, i.e. $ d J(p)\neq 0$ for $p\in S_1(f)$.

If $f$ is one--generic, then  $S_2(f)=\emptyset$. Hence $J\inv (0)=S_1(f)$ and $d J(p)\neq 0$ for $p\in J\inv(0)$. 

If  $d J(p)\neq 0$ for all $p\in J\inv(0)$, then  $Df(p)\neq 0$ for all $p\in\R^2$. Hence $S_2(f)=\emptyset$ and $j^1f\pitchfork S_1$. \end{proof}

Take a one--generic mapping $f=(f_1,f_2):\R^2\longrightarrow \R^2$.
 For $p\in S_1(f)$ one of the following two conditions can occur. \begin{equation}\label{warunek1}
T_pS_1(f)+ \ker Df(p)=\R^2 ,
\end{equation} 
\begin{equation}\label{warunek2}
T_pS_1(f)=\ker Df(p).
\end{equation}
If $p\in S_1(f)$ satisfies  (\ref{warunek1}), then $p$ is called a fold point. 

Put $\bo=(0,0)\in\R^2$. Notice that the space $T_pS_1(f)$ is spanned by a vector $(-\frac{\partial J}{\partial y}(p),\frac{\partial J}{\partial x}(p))$, so we get
\begin{lemma}\label{falda}
A point $p\in \R^2$ is a fold point if and only if $J(p)=0$ and $$Df(p)\cdot \left [
\begin{array}{r} 
-\frac{\partial J}{\partial y}(p)\\
\frac{\partial J}{\partial x}(p) 
\end{array} \right ]\neq \bo,$$ i.e. $p$ is a regular point of $f|_{S_1(f)}$.

So $p\in S_{1}(f)$ satisfies  condition (\ref{warunek2}) if and only if $J(p)=0$ and $$Df(p)\cdot \left [
\begin{array}{r}
-\frac{\partial J}{\partial y}(p)\\ 
\frac{\partial J}{\partial x}(p)
\end{array} \right ]=\bo.$$
\end{lemma}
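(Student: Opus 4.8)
The plan is to reduce everything to elementary linear algebra on the two lines $T_pS_1(f)$ and $\ker Df(p)$ inside $\R^2$. First I would record the relevant dimensions. By Lemma \ref{oneGeneric}, for a one--generic $f$ we have $S_1(f)=J\inv(0)$ and $dJ(p)\neq\bo$ at every $p\in S_1(f)$; hence $S_1(f)$ is a smooth curve and its tangent line $T_pS_1(f)$ is the kernel of $dJ(p)$, spanned by the nonzero vector $v=(-\partial J/\partial y(p),\partial J/\partial x(p))$ already identified before the statement. Since $p\in S_1(f)$ has corank $1$, we have $\rank Df(p)=1$, so $\ker Df(p)$ is also a line. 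Thus both $T_pS_1(f)$ and $\ker Df(p)$ are one--dimensional subspaces of $\R^2$.

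The heart of the argument is the dichotomy for two lines $L_1,L_2$ in $\R^2$: either $L_1+L_2=\R^2$, which happens exactly when they are distinct, or $L_1=L_2$; these are the mutually exclusive and exhaustive alternatives, matching conditions (\ref{warunek1}) and (\ref{warunek2}) precisely. I would apply this with $L_1=T_pS_1(f)=\operatorname{span}(v)$ and $L_2=\ker Df(p)$. The computation I would then carry out is that $Df(p)\cdot v=\bo$ if and only if $v\in\ker Df(p)$, which — because $v$ spans the line $T_pS_1(f)$ and both subspaces are one--dimensional — is equivalent to $T_pS_1(f)=\ker Df(p)$, i.e. to (\ref{warunek2}). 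Passing to contrapositives, (\ref{warunek1}) holds if and only if $Df(p)\cdot v\neq\bo$. Combined with the condition $J(p)=0$ defining $S_1(f)$, this yields both displayed equivalences of the lemma.

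Finally I would match the phrase ``$p$ is a regular point of $f|_{S_1(f)}$''. The derivative of the restriction $f|_{S_1(f)}$ at $p$ is the linear map $T_pS_1(f)\to\R^2$ sending $w\mapsto Df(p)\cdot w$; since $T_pS_1(f)$ is spanned by $v$, this map has full rank $1$ (equivalently, is injective) precisely when $Df(p)\cdot v\neq\bo$, which is exactly the fold condition just established. There is no serious obstacle here: the only points needing care are that $v\neq\bo$, guaranteed by one--genericity through $dJ(p)\neq\bo$, and that the two subspaces are genuinely one--dimensional, so that the ``distinct versus equal'' dichotomy is exhaustive. Once these are in place, the stated equivalences follow immediately.
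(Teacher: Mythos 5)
Your proposal is correct and follows essentially the same route as the paper: the paper derives the lemma directly from the observation that $T_pS_1(f)$ is spanned by $\bigl(-\frac{\partial J}{\partial y}(p),\frac{\partial J}{\partial x}(p)\bigr)$, and your argument simply spells out the implicit linear-algebra dichotomy (two lines in $\R^2$ are either equal or span the plane) together with the identification of the fold condition with regularity of $f|_{S_1(f)}$. No gaps; the extra detail you supply is exactly what the paper leaves to the reader.
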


Assume that  condition (\ref{warunek2}) holds at $p\in S_1(f)$.
 Take  a smooth function $k$  on a  neighbourhood $U$ of $p$ such that $k\equiv0$ on $S_1(f)\cap U$ and $dk(p)\neq 0$. 
(By Lemma \ref{oneGeneric}, $J$ satisfies both these conditions.)
Let $\xi$ be a nonvanishing vector field  along $S_1(f)$ such that  $\xi $ is in the kernel of $Df$
at each point of $S_1(f)\cap U$. Then $dk(\xi )$ is a function on $S_1(f)$ having a zero at $p$.
The order of this zero does not depend on the choice of $k$ and $\xi$ (see \cite[p.146]{golub}),
so it equals the order of $dJ(\xi)$ at $p$.
\begin{definition}
We say that $p$ is a simple cusp if $p$ is a simple zero of $dJ(\xi)$. 
\end{definition} 

Let $F=(F_1,F_2):\R^2\longrightarrow\R^2$ be given by $$
F=\left [ \begin{array}{cc}
F_1\\
F_2\\
\end{array} \right ]=
[Df]\cdot \left [
\begin{array}{r}
-\frac{\partial J}{\partial y}\\
 \frac{\partial J}{\partial x} \\ 
\end{array} \right ].$$ 
So
$$F_1=-\frac{\partial f_1}{\partial x}\frac{\partial J}{\partial y}+
\frac{\partial f_1}{\partial y}\frac{\partial J}{\partial x}=\frac{\partial(J,f_1)}{\partial(x,y)},$$ 
$$F_2=-\frac{\partial f_2}{\partial x}\frac{\partial J}{\partial y}+
\frac{\partial f_2}{\partial y}\frac{\partial J}{\partial x}=\frac{\partial(J,f_2)}{\partial(x,y)}.$$

According to Lemma \ref{falda}, $p\in S_1(f)$ is a fold point if and only if $F(p)\neq \bo$.
\begin{lemma}\label{cusp1}
A point $p\in S_1(f)$ is a simple cusp if and  only if $F(p)=\bo$ and $$[DF(p)]\cdot\left [
\begin{array}{r}
-\frac{\partial J}{\partial y}(p)\\
 \frac{\partial J}{\partial x}(p) \\
\end{array} \right ]\neq \bo,$$   
 i.e. if $J(p)=0$, $F_1(p)=0$, $F_2(p)=0$, and either
$\partial(J,F_1)/\partial(x,y)(p)\neq 0$ or $\partial(J,F_2)/\partial(x,y)(p)\neq 0$.
\end{lemma}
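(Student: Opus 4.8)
The plan is to trade the given data for the orthogonal frame along $S_1(f)=J\inv(0)$ consisting of the tangent field $v=(-\partial J/\partial y,\,\partial J/\partial x)$ and the normal field $\nabla J=(\partial J/\partial x,\,\partial J/\partial y)$; by Lemma \ref{oneGeneric} both are nonvanishing on $S_1(f)$, and $\langle\nabla J,v\rangle=0$. By definition $F=[Df]\,v$, so the hypothesis that $p$ satisfies (\ref{warunek2}), i.e.\ $\ker Df(p)=T_pS_1(f)$, is by Lemma \ref{falda} exactly the condition $F(p)=\bo$; recall also that this is the standing assumption under which a simple cusp is defined, and that the only alternative for $p\in S_1(f)$ is the fold case (\ref{warunek1}). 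This already yields $J(p)=F_1(p)=F_2(p)=0$, so it remains to show that the remaining inequality is equivalent to $p$ being a \emph{simple} zero of $dJ(\xi)$.

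First I would write the kernel field as $\xi=a\,v+b\,\nabla J$ along $S_1(f)$, with $a,b$ smooth functions on the curve (the decomposition is legitimate since $\{v,\nabla J\}$ is a smooth frame there). Because $\langle\nabla J,v\rangle=0$, one computes $dJ(\xi)=\langle\nabla J,\xi\rangle=b\,|\nabla J|^2$, and as $|\nabla J|^2$ is positive near $p$ the order of vanishing of $dJ(\xi)$ at $p$ equals that of $b$. Since $\xi(p)\in\ker Df(p)=T_pS_1(f)$ is the line spanned by $v(p)$ and $\xi(p)\neq\bo$, we get $b(p)=0$ and $a(p)\neq0$; thus $p$ is a simple cusp if and only if $b$ has a simple zero at $p$.

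Next I would exploit $[Df]\,\xi=\bo$ on $S_1(f)$. Writing $G=[Df]\,\nabla J$, this reads $a\,F+b\,G=\bo$; note $G(p)=[Df(p)]\,\nabla J(p)\neq\bo$ because $\nabla J(p)\perp v(p)$ is not in $\ker Df(p)$. Parametrize $S_1(f)$ by $\gamma(t)$ with $\gamma(0)=p$ and $\gamma'(0)=v(p)$, and differentiate $a\,F+b\,G=\bo$ at $t=0$. Since $F(p)=\bo$ and $b(p)=0$, the terms carrying $a'$ and the derivative of $G$ drop out, and the chain rule leaves $a(p)\,[DF(p)]\,v(p)=-b'(0)\,G(p)$. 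As $a(p)\neq0$ and $G(p)\neq\bo$, this shows $b'(0)\neq0$ if and only if $[DF(p)]\,v(p)\neq\bo$, which is the asserted inequality. Finally, the componentwise reading $[DF_i(p)]\,v(p)=-\frac{\partial F_i}{\partial x}\frac{\partial J}{\partial y}+\frac{\partial F_i}{\partial y}\frac{\partial J}{\partial x}=\frac{\partial(J,F_i)}{\partial(x,y)}(p)$ turns $[DF(p)]\,v(p)\neq\bo$ into the stated Jacobian condition.

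The main obstacle is the third step: isolating $b'(0)$ cleanly from the differentiated kernel relation. The point is that two independent facts at $p$---namely $F(p)=\bo$ (the cusp condition (\ref{warunek2})) and $b(p)=0$ (the vanishing of $dJ(\xi)$)---must simultaneously annihilate the unwanted derivative terms, leaving a relation that ties $[DF(p)]\,v(p)$ to $b'(0)$ through the nonzero vector $G(p)$. I would also confirm that the order of the zero is genuinely intrinsic, i.e.\ independent of the chosen $\xi$ and of the parametrization $\gamma$, which is exactly the invariance recorded in the remark preceding the definition of a simple cusp.
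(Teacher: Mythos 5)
Your proof is correct, but it follows a genuinely different route from the paper's. The paper's trick is to use the two specific fields $\xi_i=\left(\frac{\partial f_i}{\partial y},-\frac{\partial f_i}{\partial x}\right)$ obtained by rotating the rows of $Df$: these satisfy the exact identity $dJ(\xi_i)=F_i$, and a short computation gives $Df\cdot\xi_1\equiv 0$ on $S_1(f)$, so (after assuming WLOG $\xi_1(p)\neq 0$, with $\xi_2=s\cdot\xi_1$) the field $\xi_1$ itself serves as the kernel field in the definition of a simple cusp; the simple-zero condition for $dJ(\xi_1)=F_1$ along the curve is then read off directly as regularity of $F_1|_{S_1(f)}$, i.e.\ nonvanishing of the Jacobians $\partial(J,F_i)/\partial(x,y)$. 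You instead work with an \emph{arbitrary} nonvanishing kernel field $\xi$, decompose it in the orthogonal frame $\{v,\nabla J\}$ as $\xi=a\,v+b\,\nabla J$, reduce the cusp condition to simple vanishing of the normal coefficient $b$ (via $dJ(\xi)=b\,|\nabla J|^2$), and then differentiate the kernel relation $a\,F+b\,G=\bo$ along the curve, where $G=[Df]\nabla J$; the hypotheses $F(p)=\bo$ and $b(p)=0$ kill exactly the unwanted terms and leave $a(p)\,[DF(p)]\,v(p)=-b'(0)\,G(p)$ with $a(p)\neq 0$ and $G(p)\neq\bo$, which is the desired equivalence. What each approach buys: the paper's choice of $\xi_i$ makes the key identity $dJ(\xi_i)=F_i$ hold on the nose, so no auxiliary differentiation is needed, at the cost of the case analysis around which $\xi_i$ is nonvanishing and the dependence $\xi_2=s\cdot\xi_1$; your version builds the independence of the choice of $\xi$ into the argument from the start and isolates the first-order data by one clean application of the product rule, at the cost of introducing $G$ and checking $G(p)\neq\bo$ (which correctly uses condition (\ref{warunek2}) to see $\nabla J(p)\notin\ker Df(p)$). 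All the individual steps you state check out, including the final componentwise identification $[DF_i(p)]\cdot v(p)=\partial(J,F_i)/\partial(x,y)(p)$ and the appeal to Lemma \ref{falda} for the equivalence of $F(p)=\bo$ with condition (\ref{warunek2}).
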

\begin{proof}
Put $\xi_i=\left(\frac{\partial f_i}{\partial y},-\frac{\partial f_i}{\partial x}\right)$ on $S_1(f)$.
We have  $dJ(\xi_i)=F_i$. 
Then both $dJ(\xi_i)(p)=0$ if and only if $F(p)=\bo$. 
If that is the case then
 $p$ is a simple zero of at least one $dJ(\xi_i)$ if and only if $p$ is a regular point of at least one ${F_i}_{|S_1(f)}$, i.e.
 $$\left(\frac{\partial J}{\partial x}\frac{\partial F_i}{\partial y}-\frac{\partial J}{\partial y}\frac{\partial F_i}{\partial x}\right)(p)\neq 0.$$
 The last condition holds if and only if $$[DF(p)]\cdot\left [
\begin{array}{r}
-\frac{\partial J}{\partial y}(p)\\
 \frac{\partial J}{\partial x}(p) \\
\end{array} \right ]\neq \bo.$$
Of course the fields $\xi_1,\ \xi_2 $ are linearly dependent along $S_1(f)$, and at least one 
does not vanish at $p$. Without loss of generality we may assume that $\xi_1(p)\neq 0$,
so that $\xi_2=s\cdot\xi_1$, where $s$ is a smooth function on $S_1(f)$. In particular, $dJ(\xi_2)=s\cdot dJ(\xi_1)$.
 A short computation shows that $$Df\,\cdot\, \xi_1\equiv 0 \ \mbox{ on }\ S_1(f),$$ 
so that   $\xi_1$ is in  the kernel of $Df$ along $S_1(f)$.
Of course, both  $dJ(\xi_i)(p)=0$ if and only if $dJ(\xi_1)(p)=0$, and in this case
  $p$ is a simple zero of at least one $dJ(\xi_i)$ if and 
only if $p$ is a simple zero of $dJ(\xi_1)$, i.e. $p$ is a simple cusp. \end{proof}

Recall that $J\inv(0)=S_1(f)$ is a smooth $1$--manifold, and $d J\neq 0$ on $S_1(f)$. 
Take $p\in J\inv(0)$. We can find a smooth parametrization $\psi:(\R,0)\longrightarrow (J\inv (0),p)$. 
There exists a smooth nowhere zero function $\rho$ such that  
$$\frac{d\psi}{dt}(t)=\rho(t)\left (-\frac{\partial J}{\partial y}(\psi(t)),\frac{\partial J}{\partial x}(\psi(t))\right ).$$
 It is easy to check that $$\frac{d(f\circ \psi)}{dt}(t)=\rho(t)F(\psi(t)),$$
$$ \frac{d^2(f\circ \psi)}{dt^2}(t)=\rho '(t)F(\psi(t))+\rho^2(t)[DF(\psi(t))]\left [
\begin{array}{r}
-\frac{\partial J}{\partial y}(\psi(t))\\
 \frac{\partial J}{\partial x}(\psi(t)) \\
\end{array} \right ].$$ So we get   

\begin{theorem}\label{oMapie}
Suppose that $f:\R^2\rightarrow\R^2$ is one--generic. Then,
\begin{itemize}
\item[(i)] a point $p\in\R^2$ is a fold  if and only if  $J(p)=0$ and$$\frac{d(f\circ \psi)}{dt}(0)\neq \bo,$$ 
\item[(ii)] a point $p\in\R^2$ is a simple cusp if and only if $J(p)=0$,
$$\frac{d(f\circ \psi)}{dt}(0)=\bo \ \mbox{ and }\ \frac{d^2(f\circ \psi)}{dt^2}(0)\neq \bo,$$
 i.e. if $p$ is a non--degenerate critical point of $f_{|S_{1}(f)}$. 
\end{itemize}
\end{theorem}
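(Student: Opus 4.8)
The plan is to read off both equivalences directly from the two Taylor-type formulas for $f\circ\psi$ established immediately before the statement, feeding them into the pointwise criteria of Lemmas~\ref{falda} and~\ref{cusp1}. The single fact that makes everything work is that the reparametrizing function $\rho$ is nowhere zero, so I would isolate this at the outset. Its nonvanishing is forced because $d\psi/dt$ is nonvanishing (as $\psi$ is a regular parametrization of the smooth curve $S_1(f)=J^{-1}(0)$) while the field $w=\left(-\frac{\partial J}{\partial y},\frac{\partial J}{\partial x}\right)$ is also nonvanishing along $S_1(f)$, since $dJ\neq\bo$ there by Lemma~\ref{oneGeneric}; both vectors are tangent to the $1$--manifold $S_1(f)$, hence proportional through the nowhere-zero factor $\rho$. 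In particular $\rho(0)\neq0$ and $\rho^2(0)\neq0$.

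For part~(i), I would evaluate the first formula at $t=0$, giving $\frac{d(f\circ\psi)}{dt}(0)=\rho(0)\,F(p)$. Since $\rho(0)\neq0$, this vector vanishes if and only if $F(p)=\bo$, and by Lemma~\ref{falda} the condition $F(p)\neq\bo$ is precisely the fold condition. Hence, among $p\in J^{-1}(0)$, being a fold is equivalent to $\frac{d(f\circ\psi)}{dt}(0)\neq\bo$.

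For part~(ii), the same evaluation shows that $\frac{d(f\circ\psi)}{dt}(0)=\bo$ is equivalent to $F(p)=\bo$. Under this hypothesis the summand $\rho'(0)F(p)$ in the second formula drops out, leaving $\frac{d^2(f\circ\psi)}{dt^2}(0)=\rho^2(0)\,[DF(p)]\,w(p)$. As $\rho^2(0)\neq0$, this is nonzero if and only if $[DF(p)]\,w(p)\neq\bo$, which is exactly the simple-cusp condition of Lemma~\ref{cusp1}. Combining, $p$ is a simple cusp iff $\frac{d(f\circ\psi)}{dt}(0)=\bo$ and $\frac{d^2(f\circ\psi)}{dt^2}(0)\neq\bo$. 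Finally, reading $f\circ\psi$ as $f_{|S_1(f)}$ expressed in the coordinate $t$, these two conditions say exactly that $t=0$ is a critical point of $f_{|S_1(f)}$ (the differential kills the tangent vector $w(p)$) at which the second derivative does not vanish, i.e. a non-degenerate critical point, which yields the last clause.

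There is essentially no hard analytic step here: once $\rho(0)\neq0$ is recorded, the argument is bookkeeping built on the two displayed formulas and the two preceding lemmas. The only point I would treat with care is the well-definedness of the setup, namely that $S_1(f)$ is a smooth curve admitting the regular parametrization $\psi$ and that $w$ spans its tangent line, all of which is guaranteed by one-genericity via Lemma~\ref{oneGeneric}. I would also note that the invariance of the order of the zero of $dJ(\xi)$, recalled before Lemma~\ref{cusp1}, ensures the notion of simple cusp is independent of the choice of $\psi$, so the parametrization-based criterion obtained here is intrinsic.
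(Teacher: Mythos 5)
Your proposal is correct and follows exactly the paper's route: the paper derives the same two formulas for $\frac{d(f\circ\psi)}{dt}$ and $\frac{d^2(f\circ\psi)}{dt^2}$ in terms of $\rho$, $F$, and $DF$, and then concludes the theorem ("So we get") by combining them with Lemmas~\ref{falda} and~\ref{cusp1}, just as you do. Your explicit justification that $\rho$ is nowhere zero and your care about well-definedness merely fill in details the paper leaves implicit.
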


\begin{theorem}[{\cite[Theorem 16A]{WhitneyCusps}, \cite[Theorem 2.4]{golub}}]\label{Whitney} A point $p$ is a simple cusp 
 if and only if the germ $f:(\R^2,p)\rightarrow (\R^2,f(p))$ is differentiably equivalent to the germ 
 $(u,v)\mapsto (u,uv+v^3)$.
\end{theorem}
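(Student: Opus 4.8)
The plan is to prove the two implications separately, putting almost all the work into the ``only if'' direction.

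\medskip\noindent\emph{The ``if'' direction.} I would first record that being a simple cusp is an invariant of differentiable equivalence. Indeed, by Theorem \ref{oMapie} a point is a simple cusp exactly when it lies in $S_1(f)$ and is a non--degenerate critical point of $f|_{S_1(f)}$; a source (respectively target) diffeomorphism carries $S_1(f)$ onto the critical set of the new map (respectively carries non--degenerate critical points of the restriction to non--degenerate ones), so the property passes across any equivalence. It then suffices to verify the model $g_0(u,v)=(u,uv+v^3)$ directly. Here $J=u+3v^2$, and computing $F_1=\partial(J,u)/\partial(u,v)=-6v$ and $F_2=\partial(J,uv+v^3)/\partial(u,v)=u-3v^2$ shows $J=F_1=F_2=0$ only at $\bo$, where the criterion of Lemma \ref{cusp1} is met (the vector $[DF(\bo)](0,1)^t=(-6,0)^t\neq\bo$). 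Hence $\bo$ is a simple cusp of $g_0$, and every $f$ equivalent to $g_0$ has a simple cusp at the corresponding point.

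\medskip\noindent\emph{The ``only if'' direction: normalization.} Since $\rank Df(p)=1$, linear changes in source and target bring $f$ to $f(x,y)=(x,g(x,y))$ with $p=\bo$ and $g(\bo)=g_y(\bo)=0$. As in Lemma \ref{cusp1} one gets $J=g_y$, $F_1=-g_{yy}$ and $F_2=g_{xy}g_y-g_{yy}g_x$, so that one--genericity and the simple--cusp condition translate into
\begin{equation*}
g_y(\bo)=0,\qquad g_{yy}(\bo)=0,\qquad g_{xy}(\bo)\neq 0,\qquad g_{yyy}(\bo)\neq 0.
\end{equation*}
These four conditions characterize the simple cusp intrinsically, so they persist under every further move of the allowed type. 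A source translation $y\mapsto y+\tau(x)$ with $\tau(\bo)=0$, chosen by the implicit function theorem to make $g_{yy}(x,\tau(x))\equiv 0$ (possible since $g_{yyy}(\bo)\neq 0$), removes the quadratic term in $y$, and a target change $(X,Y)\mapsto(X,Y-g(X,0))$ removes the pure function of $x$.

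\medskip\noindent\emph{The analytic core.} The remaining task is to pass from the infinite Taylor tail in $y$ to a finite model, and this is the step I expect to be the main obstacle. Since $g(\bo,y)$ vanishes to order exactly three, $g$ is $y$--regular of order $3$, and I would invoke the Malgrange preparation theorem: after a fiber--preserving source change $y\mapsto B(x,y)$, absorption of the resulting unit, a further $y$--shift removing the quadratic coefficient, and a target change swallowing the leftover function of $x$, the map becomes equivalent to $(x,y)\mapsto(x,\,y^3+a(x)y+b(x))$ with $a(\bo)=b(\bo)=0$. A target change $(X,Y)\mapsto(X,Y-b(X))$ then kills $b$, and since $g_{xy}(\bo)\neq 0$ forces $a'(\bo)\neq 0$, the function $a$ is a local diffeomorphism of the line: putting $s=a(x)$ in the source and composing with the target map $(X,Y)\mapsto(a(X),Y)$ converts $f$ into $(s,y)\mapsto(s,\,sy+y^3)$, which is the model (signs and scalars being absorbed by rescaling $x$ and $y$). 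The genuine difficulty is precisely the preparation step: one must guarantee that the reduction to the cubic unfolding $y^3+a(x)y+b(x)$ is realised by honest source and target diffeomorphisms, i.e.\ that the unit produced by Weierstrass--type division is absorbable. This is where the smooth, non--polynomial nature of $f$ makes the argument nontrivial, and is exactly the point at which \cite{golub} applies the Malgrange preparation theorem.
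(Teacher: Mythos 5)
The first thing to note is that the paper contains no proof of this statement: Theorem \ref{Whitney} is quoted from \cite[Theorem 16A]{WhitneyCusps} and \cite[Theorem 2.4]{golub}, so your attempt can only be measured against those classical arguments, not against anything in the text. Within your attempt, the ``if'' direction is correct and essentially complete: invariance of the simple--cusp property under source and target diffeomorphisms does follow from the characterization in Theorem \ref{oMapie}, and your computations for the model ($J=u+3v^2$, $F_1=-6v$, $F_2=u-3v^2$) are right; note only that applying Lemma \ref{cusp1} to the model requires observing that it is one--generic, which is immediate since $dJ=(1,6v)$ never vanishes. In the ``only if'' direction, the reduction to $f=(x,g(x,y))$ cannot be achieved by linear changes alone (you need the source change $(x,y)\mapsto(f_1(x,y),y)$, a diffeomorphism because $df_1(\bo)\neq 0$), but that is a minor slip; your translation of the hypotheses into $g_y(\bo)=g_{yy}(\bo)=0$, $g_{xy}(\bo)\neq 0$, $g_{yyy}(\bo)\neq 0$, and your concluding moves (killing $b(x)$ by a target shift, turning $a(x)$ into a coordinate) are correct.

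The genuine gap is the step you label ``absorption of the resulting unit,'' and you concede as much. After your normalization one has $g(x,y)=a(x)y+y^{3}h(x,y)$ with $h$ a unit, equivalently (by Malgrange preparation) $g=q(x,y)\bigl(y^{3}+c_{1}(x)y+c_{0}(x)\bigr)$ with $q$ a unit; but no elementary move removes the unit. A target change cannot, because $q$ depends on the source variable $y$; and the fiber rescaling $z=h^{1/3}y$ turns $y^{3}h$ into $z^{3}$ only at the cost of making the linear coefficient depend on $z$ again, so iterating does not terminate in the $C^{\infty}$ category. Indeed, the assertion that $(x,qP)$ is equivalent to $(x,P)$ under your hypotheses is essentially the content of Whitney's theorem itself, so invoking it is circular. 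The proof in \cite{golub} never absorbs a unit: it uses the preparation theorem in its module form --- $1,y,y^{2}$ generate the source germs over functions pulled back from the target --- to obtain, after a Tschirnhaus shift, a monic relation $y^{3}+(B\circ f)\,y+(C\circ f)=0$ with $B,C$ smooth on the target, and then checks from $g_{xy}(\bo)\neq 0$ and $g_{yyy}(\bo)\neq 0$ that $(X,Y)\mapsto(B(X,Y),-C(X,Y))$ is a chart on the target and $(B\circ f,\,y)$ is a chart on the source; in these charts $f$ is literally $(u,v)\mapsto(u,uv+v^{3})$. Your outline stops exactly where that argument begins, so as written the ``only if'' direction is not proved but deferred to the reference --- which is what the paper itself does by citation.
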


Denote by $\mu(p)$ the local topological degree of the germ $f:(\R^2,p)\rightarrow(\R^2,f(p))$. Hence we have
\begin{cor}
If $p$ is a cusp point of $f$ then $\mu(p)=\pm1$.
\end{cor}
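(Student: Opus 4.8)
The plan is to reduce to the Whitney normal form and compute a local degree there. Since $p$ is a simple cusp and $M=N=\R^2$ are oriented, Theorem \ref{Whitney} supplies germ diffeomorphisms $\psi:(\R^2,p)\rightarrow(\R^2,\bo)$ and $\phi:(\R^2,\bo)\rightarrow(\R^2,f(p))$ such that, on a neighbourhood of $p$, one has $f=\phi\circ g\circ\psi$ with $g(u,v)=(u,uv+v^3)$. The local topological degree is multiplicative under composition of germs, and a germ of a diffeomorphism has local degree equal to $\sgn\det$ of its derivative at the base point, hence $\pm1$. Therefore $\mu(p)=\sgn\bigl(\det D\phi(\bo)\bigr)\cdot\sgn\bigl(\det D\psi(p)\bigr)\cdot\mu_{\bo}(g)$, where $\mu_{\bo}(g)$ is the local degree of $g$ at the origin. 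So it suffices to prove $\mu_{\bo}(g)=\pm1$.

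To evaluate $\mu_{\bo}(g)$ I would count signed preimages of a regular value close to $\bo$. A direct computation gives $\det Dg(u,v)=u+3v^2$, and the fibre of $g$ over $(a,b)$ is $\{\,u=a,\ v^3+av=b\,\}$. Choosing a regular value with $a>0$, the cubic $v^3+av-b$ has strictly positive derivative $3v^2+a>0$, hence a single real root $v_0$, at which $\det Dg=a+3v_0^{\,2}>0$; thus $g$ has one preimage with positive Jacobian and $\mu_{\bo}(g)=1$. (As a consistency check one picks $a<0$: there the fibre may contain three points, e.g.\ for $b=0$ the roots $v=0,\pm\sqrt{-a}$ carry signs $-1,+1,+1$ summing again to $+1$, which confirms that the count is independent of the chosen regular value.) Combining with the previous paragraph yields $\mu(p)=\pm1$. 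Alternatively, the Eisenbud--Levine--Khimshiashvili theorem identifies $\mu_{\bo}(g)$ with the signature of the residue pairing on the local algebra $\R[[u,v]]/(u,uv+v^3)\cong\R[[v]]/(v^3)$, whose matrix in the basis $1,v,v^2$ is the anti-diagonal-type form with signature $+1$; this gives the same value.

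The one point requiring care is the invariance (up to sign) of the local topological degree under $C^\infty$ coordinate changes: this is the standard fact that $\deg$ of a composition of germs multiplies and that a diffeomorphism germ contributes the sign of its Jacobian, so that the degree of the original germ and of the normal form agree up to an overall sign. Granting this, the remaining work is purely the elementary sign count above, and the only place one must be attentive is in verifying independence of the regular value when $a<0$, where the three contributions must be checked to sum to $+1$.
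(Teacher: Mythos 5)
Your proposal is correct and takes essentially the same route as the paper: the paper deduces this corollary directly from Theorem \ref{Whitney} (differentiable equivalence with the normal form $(u,v)\mapsto(u,uv+v^3)$), leaving the degree argument implicit. You simply supply the details the paper omits --- multiplicativity of the local degree under composition with diffeomorphism germs, and the signed preimage count for the normal form (both your $a>0$ and $a<0$ computations are right) --- so there is nothing to correct.
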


\section{Degree at a cusp point}
In this section we shall show how to interpret the sign of $\det DF(p)$ at a cusp point $p$.
\begin{lemma}
A translation in the domain does not change the determinant of $DF(p)$.
\end{lemma}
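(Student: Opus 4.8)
The plan is to show that the map $F$ itself is covariant under a translation of the domain, and then to read off the determinant statement simply by differentiating that covariance relation. Let $T(x,y)=(x+a,y+b)$ be a fixed translation and put $g=f\circ T$. Because $T$ is a translation, its differential $DT$ equals the identity matrix at every point, so the chain rule gives $Dg=(Df\circ T)\cdot DT=Df\circ T$. Taking determinants yields $J_g=\det Dg=J_f\circ T$, where $J_g$ and $J_f$ denote the Jacobians of $g$ and $f$.

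Next I would track the first derivatives that appear in the definition of $F$. Again using $DT=I$, the chain rule gives $\partial J_g/\partial x=(\partial J_f/\partial x)\circ T$ and $\partial J_g/\partial y=(\partial J_f/\partial y)\circ T$, with no extra factor. Substituting these into
$$F_g=[Dg]\cdot\left[\begin{array}{r}-\partial J_g/\partial y\\ \partial J_g/\partial x\end{array}\right]$$
and using $Dg=Df\circ T$, every factor on the right-hand side is merely precomposed with $T$, so that $F_g=F_f\circ T$.

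Finally I would differentiate this covariance relation. One further application of the chain rule gives $DF_g=(DF_f\circ T)\cdot DT=DF_f\circ T$, whence $\det DF_g(q)=\det DF_f(T(q))$ for every $q$. In particular, if $p$ is a cusp point of $f$, then $T\inv(p)$ is the corresponding cusp point of $g$ and $\det DF_g(T\inv(p))=\det DF_f(p)$, which is exactly the assertion.

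The whole argument is driven by the single fact that a translation has identity differential, so no genuine obstacle arises. The only point demanding attention is the bookkeeping in the middle step, namely verifying that $F$ transforms by pure composition with $T$ — with no stray factor inherited from $DT$ — \emph{before} one differentiates. Once that covariance of $F$ is established, the invariance of $\det DF$ at corresponding points is immediate.
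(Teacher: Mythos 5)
Your proof is correct and follows essentially the same route as the paper's: both establish the covariance relation $G=F\circ T$ (using that a translation has identity differential, so the Jacobian and its partials simply precompose with $T$) and then differentiate to conclude $\det DG$ at the preimage point equals $\det DF(p)$. You merely spell out the chain-rule bookkeeping that the paper compresses into ``it is easy to see.''
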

\begin{proof}Take $p=(x_0,y_0)$ and the translation $T(x,y)=(x+x_0,y+y_0)$. 
The determinant of the Jacobian matrix associated with $g=f\circ T$ equals $J\circ T$. With $g$ we can also associate a mapping $G$ the same way as in Section 1, i.e.
$$G= [Dg]\cdot \left [
\begin{array}{r}
-\frac{\partial(J\circ T)}{\partial y}\\
 \frac{\partial (J\circ T)}{\partial x} \\
\end{array} \right ].$$ 
Now it is easy to see that $G=F\circ T$, so $\det DG(\bo)=\det DF(p).$
 \end{proof}

\noindent A translation in the target obviously does not change $\det DF(p)$.
 From now on we shall assume that  $f:\R^2\rightarrow\R^2$ has a cups point at the origin and $f(\bo)=\bo$.
\begin{lemma}
An orthogonal change of coordinates in the target does not change  the determinant of $DF(\bo)$.
\end{lemma}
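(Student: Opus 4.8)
The plan is to imitate the preceding translation lemma: replace $f$ by $g=A\circ f$, where $A$ is the given orthogonal matrix, build from $g$ the mapping $G$ exactly as $F$ was built from $f$, and show that $G$ is obtained from $F$ by a constant linear map whose determinant I can compute. The first step is to record how the Jacobian determinant transforms. Since $Dg=A\cdot Df$, we get $\tilde J:=\det Dg=\det A\cdot J$, and because $\det A$ is a constant this gives $\bigl(-\partial\tilde J/\partial y,\ \partial\tilde J/\partial x\bigr)=\det A\cdot\bigl(-\partial J/\partial y,\ \partial J/\partial x\bigr)$.

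Substituting this into the definition of $G$ and using $Dg=A\cdot Df$ once more, the scalar $\det A$ and the matrix $A$ collect into a single constant factor in front of $F$, so that
\[
G=\det A\cdot A\cdot F,\qquad\text{whence}\qquad DG=\det A\cdot A\cdot DF
\]
at every point, the last equality by the chain rule since $\det A\cdot A$ does not depend on $(x,y)$. Taking determinants at the origin gives $\det DG(\bo)=\det(\det A\cdot A)\cdot\det DF(\bo)$; evaluating the scalar factor for a $2\times2$ matrix yields $\det(\det A\cdot A)=(\det A)^2\det A=(\det A)^3$, which equals $\det A$ because $\det A=\pm1$. Hence $\det DG(\bo)=\det A\cdot\det DF(\bo)$, and for a proper rotation $\det A=1$ this is exactly the asserted invariance $\det DG(\bo)=\det DF(\bo)$.

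The step I expect to be most delicate is the sign bookkeeping of $\det A$, which enters twice — once through $\tilde J=\det A\cdot J$ and once through the outer matrix $A$ — and one must check that the two contributions combine to $(\det A)^3=\det A$ rather than cancelling. For $A\in SO(2)$, which is all that is needed to normalize $Df(\bo)$ in the reduction to the Whitney form of Theorem~\ref{Whitney}, this factor is $+1$ and $\det DF(\bo)$ is genuinely unchanged; the orientation-reversing case $\det A=-1$ is precisely where the sign has to be watched, and it is naturally reconciled with the simultaneous change of the local degree $\mu(\bo)$ that the section is ultimately comparing against.
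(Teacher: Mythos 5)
Your proof is correct and essentially identical to the paper's: both substitute $g=A\circ f$ into the construction of $F$, observe that $G$ is a constant invertible linear map applied to $F$, and take determinants. The only difference is that the paper's ``orthogonal isomorphism'' $L(x,y)=(ax-by,bx+ay)$ with $a^2+b^2=1$ is already a rotation, so $\det L=1$ and no sign bookkeeping arises; your factor $(\det A)^3=\det A$ is the sharper statement, showing the invariance holds exactly for $A\in SO(2)$ and that the sign flips under a reflection --- consistent with how the lemma is actually used, since only rotations appear in Lemma \ref{zerowePochodne}.
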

\begin{proof}Take an orthogonal isomorphism  $L(x,y)=(ax-by,bx+ay)$, where $a^2+b^2=1$. 
Then $\bo$ is a cusp point of $L\circ f$, and the determinant of the Jacobian
matrix associated with this mapping equals $(a^2+b^2)J=J$. With $g=L\circ f$ we can also associate  $$G=\left[\begin{array}{c}
G_1\\G_2\end{array}\right]=
D(L\circ f)\cdot\left[\begin{array}{r}  -\frac{\partial J}{\partial y}\\ \frac{\partial J}{\partial x}   \end{array}\right]=
 \left [
\begin{array}{cc}
a&-b\\b&a
\end{array} \right ]\cdot\left [Df\right ]\cdot\left [
\begin{array}{r}
-\frac{\partial J}{\partial y}\\
 \frac{\partial J}{\partial x} \\
\end{array} \right ]$$
$$=\left [
\begin{array}{cc}
a&-b\\b&a
\end{array} \right ]\cdot\left[\begin{array}{c}
F_1\\F_2\end{array}\right]=\left [
\begin{array}{cc}
a&-b\\b&a
\end{array}\right ] \cdot F=L\circ F.$$ 
Now it is easy to see that $\det DG(\bo)=\det DF(\bo).$ \end{proof}

\begin{lemma}
An orthogonal change of coordinates in the domain does not change  the determinant of $DF(\bo)$.
\end{lemma}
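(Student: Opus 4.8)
As in the two preceding lemmas, the plan is to show that the mapping $G$ associated with $g=f\circ L$ is again equal to $F\circ L$; the determinant statement then follows at once. Following the convention used for the target, take $L(x,y)=(ax-by,bx+ay)$ with $a^2+b^2=1$ and let $A=\left[\begin{smallmatrix}a&-b\\ b&a\end{smallmatrix}\right]$ be its constant Jacobian matrix, so that $\det A=1$ and $A^{\mathrm{T}}=A\inv$. Since $L$ is linear, $L(\bo)=\bo$, and the chain rule gives $Dg=([Df]\circ L)\,A$; taking determinants, $\tilde J:=\det Dg=(J\circ L)\det A=J\circ L$.

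Next I would record the two ingredients entering the definition of $G$. Write $\nabla J=\left[\begin{smallmatrix}\partial J/\partial x\\ \partial J/\partial y\end{smallmatrix}\right]$ and $R=\left[\begin{smallmatrix}0&-1\\ 1&0\end{smallmatrix}\right]$, so that $F=[Df]\,R\,\nabla J$. Differentiating $\tilde J=J\circ L$ yields $\nabla\tilde J=A^{\mathrm{T}}\,(\nabla J\circ L)$, and substituting this together with $Dg=([Df]\circ L)\,A$ into the definition $G=[Dg]\,R\,\nabla\tilde J$ gives
$$G=([Df]\circ L)\,A\,R\,A^{\mathrm{T}}\,(\nabla J\circ L).$$

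The crux is the algebraic identity $A\,R\,A^{\mathrm{T}}=R$. It holds because $A$, being a rotation, commutes with the $90^{\circ}$-rotation $R$, so that $A\,R\,A^{\mathrm{T}}=A\,R\,A\inv=R$ (equivalently, one verifies it by multiplying out the $2\times2$ matrices and using $a^2+b^2=1$). Granting this, the two copies of $A$ cancel and $G=([Df]\circ L)\,R\,(\nabla J\circ L)=F\circ L$, since $R\,(\nabla J\circ L)$ is precisely the column vector $\left[\begin{smallmatrix}-(\partial J/\partial y)\circ L\\ (\partial J/\partial x)\circ L\end{smallmatrix}\right]$ occurring in $F\circ L$.

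Finally, differentiating $G=F\circ L$ at $\bo$ and using $L(\bo)=\bo$ gives $DG(\bo)=DF(\bo)\,A$, whence $\det DG(\bo)=\det DF(\bo)\,\det A=\det DF(\bo)$, as asserted. The only step that is not routine chain-rule bookkeeping is the commutation identity $A\,R\,A^{\mathrm{T}}=R$, which expresses that the operation sending a function to its $90^{\circ}$-rotated gradient transforms covariantly under an orthogonal change of variables in the domain; note also that it is exactly the condition $\det A=1$ that turns this covariance $G=F\circ L$ into invariance of the determinant, in complete parallel with the target-coordinate computation $G=L\circ F$.
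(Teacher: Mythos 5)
Your proof is correct and follows essentially the same route as the paper: both establish $G=F\circ L$ via the chain rule and the orthogonality/commutation of the rotation matrices, then conclude $\det DG(\bo)=\det DF(\bo)\det A=\det DF(\bo)$. The paper simply writes out the product $A\,A^{\mathrm{T}}=I$ explicitly in coordinates where you isolate the identity $A\,R\,A^{\mathrm{T}}=R$, but the underlying computation is identical.
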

\begin{proof}Take an orthogonal isomorphism $R(x,y)=(cx-dy,dx+cy)$, where $c^2+d^2=1$. 
Then $\bo$ is a cusp point of $f\circ R$ and the determinant 
of the Jacobian matrix associated with this mapping equals $J\circ R$. With $g=f\circ R$ we can also associate 
$$G=\left[\begin{array}{c}
G_1\\G_2\end{array}\right]=D(f\circ R)\, \cdot\, \left[ \begin{array}{r}-\frac{\partial \, (J\circ R)}{\partial y}\\ \frac{\partial\, (J\circ R)}{\partial x} \end{array} \right]$$

$$=\left [Df\circ R\right ]\cdot \left [ 
\begin{array}{cc}
c&-d\\d&c
\end{array}  \right ]\, \cdot\, \left [
\begin{array}{cc}
c&d\\-d&c
\end{array} \right ]\cdot\left [
\begin{array}{r}
-\frac{\partial J}{\partial y}\circ R\\
 \frac{\partial J}{\partial x}\circ R \\
\end{array} \right ]$$
$$=\left[\begin{array}{c}
F_1\circ R\\F_2\circ R\end{array}\right]=F\circ R$$
Now it is easy to see that $\det DG(\bo)=(c^2+d^2)\det DF(\bo)=\det DF(\bo)$. \end{proof}

\begin{lemma}\label{zerowePochodne}
Assume that $\rank Df(\bo)=1$, $d J(\bo)\neq 0$ and $F(\bo)=0$. 
Then after an orthogonal change of coordinates in the domain and in the target we may have 
$$\frac{\partial f_1}{\partial x}(\bo)=\frac{\partial f_2}{\partial x}(\bo)=
\frac{\partial f_2}{\partial y}(\bo)=0,\ \  \frac{\partial f_1}{\partial y}(\bo)\neq 0,$$
$$\frac{\partial J}{\partial x}(\bo)=0,\ \frac{\partial J}{\partial y}(\bo)\neq 0.$$
\end{lemma}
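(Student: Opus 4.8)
The plan is to use the hypotheses to identify the kernel of $Df(\bo)$ and its relation to $\nabla J(\bo)$, and then to straighten both by a suitable domain rotation followed by a target rotation. First I would unwind the condition $F(\bo)=\bo$. By the very definition of $F$, this equality says that $v:=\left(-\frac{\partial J}{\partial y}(\bo),\frac{\partial J}{\partial x}(\bo)\right)$ lies in $\ker Df(\bo)$. Since $\rank Df(\bo)=1$ this kernel is a line, and since $dJ(\bo)\neq 0$ the vector $v$ is nonzero; hence $\ker Df(\bo)=\R v$. The key structural observation is that $v$ is orthogonal to $\nabla J(\bo)=\left(\frac{\partial J}{\partial x}(\bo),\frac{\partial J}{\partial y}(\bo)\right)$, because their inner product is $-\frac{\partial J}{\partial y}(\bo)\frac{\partial J}{\partial x}(\bo)+\frac{\partial J}{\partial x}(\bo)\frac{\partial J}{\partial y}(\bo)=0$.

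Next I would perform the domain rotation. Let $R(x,y)=(cx-dy,dx+cy)$ with $c^2+d^2=1$, and put $g=f\circ R$. As in the lemma on orthogonal changes in the domain, the Jacobian determinant of $g$ is $J\circ R$, so $\nabla(J\circ R)(\bo)=(DR)^{T}\nabla J(\bo)$. Choosing $(c,d)$ proportional to $\left(-\frac{\partial J}{\partial y}(\bo),\frac{\partial J}{\partial x}(\bo)\right)$ makes the first coordinate $c\frac{\partial J}{\partial x}(\bo)+d\frac{\partial J}{\partial y}(\bo)$ vanish, which is legitimate because $\nabla J(\bo)\neq\bo$; this yields $\frac{\partial(J\circ R)}{\partial x}(\bo)=0$ and $\frac{\partial(J\circ R)}{\partial y}(\bo)\neq 0$. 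The point of this particular choice is that $\ker Dg(\bo)=\R\,(DR)^{T}v$, and since $(DR)^{T}$ is orthogonal and $v\perp\nabla J(\bo)$, the vector $(DR)^{T}v$ is horizontal once $(DR)^{T}\nabla J(\bo)$ has been made vertical. Hence $(1,0)\in\ker Dg(\bo)$, so the first column of $Dg(\bo)$ vanishes, i.e. $\frac{\partial g_1}{\partial x}(\bo)=\frac{\partial g_2}{\partial x}(\bo)=0$; and since the rank is unchanged the second column $\beta:=\left(\frac{\partial g_1}{\partial y}(\bo),\frac{\partial g_2}{\partial y}(\bo)\right)$ is nonzero.

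Finally I would perform the target rotation. At this stage $Dg(\bo)=\left[\begin{array}{cc}0&\beta_1\\0&\beta_2\end{array}\right]$ with $\beta=(\beta_1,\beta_2)\neq\bo$. Take $L(x,y)=(ax-by,bx+ay)$ with $(a,b)$ proportional to $(\beta_1,-\beta_2)$, and put $h=L\circ g$. Since $Dh(\bo)=DL\cdot Dg(\bo)$, the bottom-right entry becomes $b\beta_1+a\beta_2=0$ while the top-right entry becomes $a\beta_1-b\beta_2$, a positive multiple of $\beta_1^2+\beta_2^2\neq 0$; the first column stays zero. Thus $h=L\circ f\circ R$ satisfies $\frac{\partial h_1}{\partial x}(\bo)=\frac{\partial h_2}{\partial x}(\bo)=\frac{\partial h_2}{\partial y}(\bo)=0$ and $\frac{\partial h_1}{\partial y}(\bo)\neq 0$, which is exactly the desired normalization of $Df$.

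The one step that needs care is to check that the target rotation does not destroy the normalization of $\nabla J$ obtained from the domain rotation; this is why the two rotations are done in this order. But this is guaranteed by the lemma on orthogonal changes in the target: such an $L$ multiplies the Jacobian determinant by $\det DL=a^2+b^2=1$, so the Jacobian determinant of $h$ equals that of $g$, namely $J\circ R$, and therefore $\frac{\partial(J\circ R)}{\partial x}(\bo)=0$, $\frac{\partial(J\circ R)}{\partial y}(\bo)\neq 0$ persist after applying $L$. The two normalizations are thus independent, and no further adjustment is required. I expect the only genuine content to be the orthogonality $\ker Df(\bo)\perp\nabla J(\bo)$ forced by $F(\bo)=\bo$; the rest is routine tracking of how $Df$ and $\nabla J$ behave under the two orthogonal substitutions.
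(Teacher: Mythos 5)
Your proof is correct, but it reaches the normal form by a genuinely different mechanism than the paper. The paper chooses the target rotation $L$ so that the second row of the derivative vanishes, i.e.\ $b\,\nabla f_1(\bo)+a\,\nabla f_2(\bo)=\bo$ (possible because $\rank Df(\bo)=1$), and the domain rotation $R$ so that $\frac{\partial J'}{\partial x}(\bo)=0$; the hypothesis $F(\bo)=\bo$ then enters \emph{analytically}: by the implicit function theorem $S_1(g)$ is locally the graph $(t,\varphi(t))$ with $\varphi'(0)=0$, and since $\bo$ is a critical point of $g|_{S_1(g)}$, differentiating $g(t,\varphi(t))$ at $t=0$ forces $\frac{\partial g_1}{\partial x}(\bo)=0$. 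You instead convert $F(\bo)=\bo$ into linear algebra at the outset: it says exactly that $v=\bigl(-\frac{\partial J}{\partial y}(\bo),\frac{\partial J}{\partial x}(\bo)\bigr)$ spans $\ker Df(\bo)$, and $v$ is automatically orthogonal to $\nabla J(\bo)$; hence your single domain rotation, which makes $\nabla (J\circ R)(\bo)$ vertical, simultaneously makes the kernel horizontal and so kills the whole first column of $Dg(\bo)$, after which the target rotation kills the lower entry of the remaining column. Your intermediate normal forms are transposed relative to the paper's (you first produce a zero column, the paper first produces a zero row), and your argument is shorter, purely linear-algebraic, and makes transparent that $F(\bo)=\bo$ means the kernel line coincides with the tangent line to the fold curve $J^{-1}(0)$. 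What the paper's longer route buys is the curve $\varphi$ itself: the parametrization with $\varphi'(0)=0$ constructed inside its proof is reused immediately afterwards (to compute $\varphi''(0)$, the vectors $v_1,v_2$, and ultimately Lemmas \ref{k2} and \ref{k1}), so the paper's proof doubles as setup for the degree computation, whereas your proof, though cleaner in isolation, would still require that construction later.
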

\begin{proof}There exist $a,b,c,d\in \R$ such that $a^2+b^2=1$, $c^2+d^2=1$ and 
$$b\left(\frac{\partial f_1}{\partial x},\frac{\partial f_1}{\partial y}\right)+a\left(\frac{\partial f_2}{\partial x},\frac{\partial f_2}{\partial y}\right)=\bo,
\ \ c\,\frac{\partial J}{\partial x}+d\,\frac{\partial J}{\partial y}=0$$
at the origin.
 Let us consider two orthogonal isomorphisms: $L(x,y)=(ax-by,bx+ay)$ and $R(x,y)=(cx-dy,dx+cy)$. 
Put $g=(g_1,g_2)=L\circ f\circ R$.
Then $\rank Dg(\bo)=\rank Df(\bo)=1$ and 
\begin{equation}\label{pochodneFunkcji}\frac{\partial g_2}{\partial x}(\bo)=0, \ \ \frac{\partial g_2}{\partial y}(\bo)=0.\end{equation} 
 Let $J'$ denote the determinant of the Jacobian matrix $Dg$. 
Of course $J'=(a^2+b^2)(c^2+d^2)J\circ R=J\circ R$,
so that $dJ'(\bo)\neq 0$. One may check that
 \begin{equation}\label{pochoneJakobianu}\frac{\partial J'}{\partial x}(\bo)=0,\ \mbox{so}\ \frac{\partial J'}{\partial y}(\bo)\neq 0.\end{equation}
Then $(J')\inv (0)$ is  locally a smooth $1$--manifold near $\bo$, and there is $\varphi:(\R,0)\longrightarrow(\R,0)$ such that $J'(t,\varphi (t))\equiv 0$. 
Hence $0\equiv \frac{d}{dt}J'(t,\varphi (t))=\frac{\partial J'}{\partial x}(t,\varphi(t))+\frac{\partial J'}{\partial y}(t,\varphi(t))\varphi '(t)$. By (\ref{pochoneJakobianu}), $\varphi '(0)=0$. 
Because $F(\bo)=0$, then $\bo$ is a critical point of both $f|S_1(f)$ and $g|S_1(g)$. So  $g(t,\varphi(t))$
has a critical point at 0. By (\ref{pochodneFunkcji})
$$\bo=\frac{d}{d t}\left [
\begin{array}{c}
g_1(t,\varphi (t))\\
g_2(t,\varphi (t))
\end{array} \right ]_{|_{t=0}}=\left [\begin{array}{c}
\frac{\partial g_1}{\partial x}(\bo)+\frac{\partial g_1}{\partial y}(\bo)\varphi '(0)\\
\frac{\partial g_2}{\partial x}(\bo)+\frac{\partial g_2}{\partial y}(\bo)\varphi '(0)
\end{array} \right ]=\left [\begin{array}{c}
\frac{\partial g_1}{\partial x}(\bo)\\
0
\end{array} \right ],$$ 
and so $\frac{\partial g_1}{\partial x}(\bo)=0$. Because $\rank Dg(\bo)=1$, so $\frac{\partial g_1}{\partial y}(\bo)\neq 0$.  \end{proof}

Assume that $\rank Df(\bo)=1$, $d J(\bo)\neq 0$ and $F(\bo)=\bo$. 
Choose coordinates satisfying Lemma \ref{zerowePochodne}.
 If that is the case then 
$\frac{\partial J}{\partial y}(\bo)=-\frac{\partial f_1}{\partial y}(\bo)\frac{\partial^2 f_2}{\partial x\partial y}(\bo)$,
hence 
\begin{equation}\frac{\partial^2 f_2}{\partial x\partial y}(\bo)\neq 0.\label{r1}   \end{equation}
Moreover
$0=\frac{\partial J}{\partial x}(\bo)=-\frac{\partial f_1}{\partial y}(\bo)\frac{\partial^2 f_2}{\partial x^2}(\bo)$, 
so that 
\begin{equation}\label{r3}\frac{\partial^2 f_2}{\partial x^2}(\bo)=0.\end{equation}
As above there is smooth $\varphi:(\R,0)\longrightarrow(\R,0)$ such that 
$J(t,\varphi(t))\equiv 0$ and $\varphi '(0)=0$. 
Then 
$$0=\frac{d^2 }{d t^2}J(t,\varphi (t))_{|t=0}=\frac{\partial^2J}{\partial x^2}(\bo)+\frac{\partial J}{\partial y}(\bo)\varphi ''(0),$$ 
and $\varphi ''(0)=-\frac{\partial^2J}{\partial x^2}(\bo)/\frac{\partial J}{\partial y}(\bo).$
As $\bo$ is a cusp point then, according to Theorem \ref{oMapie},  
$$\frac{d^2}{d t^2}\left [
\begin{array}{c}
f_1(t,\varphi (t))\\
f_2(t,\varphi (t))
\end{array} \right ]_{|t=0}\neq \bo.$$
It is easy to see that $\frac{d^2}{d t^2}f_2(t,\varphi (t))_{|t=0}=0$, so
 \begin{equation}\label{r2}  0\neq\frac{d^2}{d t^2}f_1(t,\varphi(t))_{|t=0}=\frac{\partial^2f_1}{\partial x^2}(\bo)+\frac{\partial f_1}{\partial y}(\bo)\varphi ''(0)=\end{equation}
$$=\frac{\partial^2f_1}{\partial x^2}(\bo)-\left(\frac{\partial f_1}{\partial y}(\bo)\frac{\partial^2J}{\partial x^2}(\bo)\right)/\frac{\partial J}{\partial y}(\bo).$$
Two non--zero vectors 
$$v_1=\frac{d^2}{d t^2}\left [
\begin{array}{c} 
f_1(t,\varphi (t))\\
f_2(t,\varphi (t))
\end{array} \right ]_{|t=0}=\left [
\begin{array}{c}
\frac{\partial^2f_1}{\partial x^2}(\bo)-\frac{\partial f_1}{\partial y}(\bo)\frac{\partial^2J}{\partial x^2}(\bo)/\frac{\partial J}{\partial y}(\bo)\\
0
\end{array} \right ],$$
$$v_2=Df(\bo)\cdot \left[ \begin{array}{c}\frac{\partial J}{\partial x}(\bo)\\ \frac{\partial J}{\partial y}(\bo)      \end{array}     \right]=
\left [
\begin{array}{c}
\frac{\partial f_1}{\partial y}(\bo)\frac{\partial J}{\partial y}(\bo)\\
0
\end{array} \right ]$$
point in the same direction if and only if $$\frac{\partial f_1}{\partial y}(\bo)\left(\frac{\partial^2f_1}{\partial x^2}(\bo)\frac{\partial J}{\partial y}(\bo)-\frac{\partial f_1}{\partial y}(\bo)\frac{\partial^2J}{\partial x^2}(\bo)\right)>0.$$
\begin{lemma}\label{k2}
We have\begin{itemize}
\item[(i)] $\det DF(\bo)\neq 0$,
\item[(ii)] vectors $v_1$ and $v_2$ point in the same (resp. opposite) direction if and only if $\det DF(\bo)<0$ (resp. $\det DF(\bo)>0$).
\end{itemize}
\end{lemma}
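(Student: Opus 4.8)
The plan is to compute the Jacobian $DF(\bo)$ entry by entry in the normalized coordinates of Lemma~\ref{zerowePochodne}, and then simply read off its determinant. First I would differentiate the explicit formulas $F_1=-\frac{\partial f_1}{\partial x}\frac{\partial J}{\partial y}+\frac{\partial f_1}{\partial y}\frac{\partial J}{\partial x}$ and $F_2=-\frac{\partial f_2}{\partial x}\frac{\partial J}{\partial y}+\frac{\partial f_2}{\partial y}\frac{\partial J}{\partial x}$ and evaluate at $\bo$. The key observation is that the vanishing conditions $\frac{\partial f_1}{\partial x}(\bo)=\frac{\partial f_2}{\partial x}(\bo)=\frac{\partial f_2}{\partial y}(\bo)=0$ and $\frac{\partial J}{\partial x}(\bo)=0$, together with $\frac{\partial^2 f_2}{\partial x^2}(\bo)=0$ from (\ref{r3}), annihilate almost every term; in particular each summand of $\frac{\partial F_2}{\partial x}(\bo)$ carries one of these factors, so $\frac{\partial F_2}{\partial x}(\bo)=0$. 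Hence $DF(\bo)$ is upper triangular and
$$\det DF(\bo)=\frac{\partial F_1}{\partial x}(\bo)\cdot\frac{\partial F_2}{\partial y}(\bo).$$

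Evaluating the two surviving diagonal entries, the same cancellations leave
$$\frac{\partial F_1}{\partial x}(\bo)=-\left(\frac{\partial^2 f_1}{\partial x^2}(\bo)\frac{\partial J}{\partial y}(\bo)-\frac{\partial f_1}{\partial y}(\bo)\frac{\partial^2 J}{\partial x^2}(\bo)\right),\qquad\frac{\partial F_2}{\partial y}(\bo)=-\frac{\partial^2 f_2}{\partial x\partial y}(\bo)\frac{\partial J}{\partial y}(\bo).$$
Abbreviating the bracketed expression by $A$ (which is exactly the factor appearing in the same-direction criterion derived just before the statement), and substituting the identity $\frac{\partial J}{\partial y}(\bo)=-\frac{\partial f_1}{\partial y}(\bo)\frac{\partial^2 f_2}{\partial x\partial y}(\bo)$ to eliminate $\frac{\partial^2 f_2}{\partial x\partial y}(\bo)$, I expect to arrive at the clean form
$$\det DF(\bo)=-A\cdot\frac{\left(\frac{\partial J}{\partial y}(\bo)\right)^2}{\frac{\partial f_1}{\partial y}(\bo)}.$$

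With this formula both parts follow immediately. For (i), equation (\ref{r2}) is precisely the assertion $A\neq 0$, while $\frac{\partial J}{\partial y}(\bo)\neq 0$ and $\frac{\partial f_1}{\partial y}(\bo)\neq 0$ hold by the choice of coordinates, so the product is nonzero. For (ii), the square $\left(\frac{\partial J}{\partial y}(\bo)\right)^2$ is positive, so the sign of $\det DF(\bo)$ is the sign of $-A/\frac{\partial f_1}{\partial y}(\bo)$, which is opposite to the sign of $\frac{\partial f_1}{\partial y}(\bo)\cdot A$. Since $v_1$ and $v_2$ were shown to point in the same direction exactly when $\frac{\partial f_1}{\partial y}(\bo)\cdot A>0$, this yields at once that ``same direction'' corresponds to $\det DF(\bo)<0$ and ``opposite direction'' to $\det DF(\bo)>0$. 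I do not anticipate a conceptual obstacle here: the only real work is the bookkeeping of second derivatives in the middle paragraph, where one must keep careful track of which of the normalizations from Lemma~\ref{zerowePochodne} and which of (\ref{r1})--(\ref{r3}) eliminates each term.
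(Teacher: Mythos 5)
Your proposal is correct and follows essentially the same route as the paper: both use the normalizations of Lemma \ref{zerowePochodne} together with (\ref{r1})--(\ref{r3}) to kill every term of $\frac{\partial F_2}{\partial x}(\bo)$, making $DF(\bo)$ triangular with the same two diagonal entries, and then reduce the sign of the determinant to that of $-\frac{\partial f_1}{\partial y}(\bo)\cdot A$, matching the same-direction criterion. The only cosmetic difference is that you eliminate $\frac{\partial^2 f_2}{\partial x\partial y}(\bo)$ via the identity $\frac{\partial J}{\partial y}(\bo)=-\frac{\partial f_1}{\partial y}(\bo)\frac{\partial^2 f_2}{\partial x\partial y}(\bo)$, whereas the paper keeps that factor and observes that its square is positive.
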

\begin{proof}
We have $$F=(F_1,F_2)=\left(-\frac{\partial f_1}{\partial x}\frac{\partial J}{\partial y}+
\frac{\partial f_1}{\partial y}\frac{\partial J}{\partial x},-\frac{\partial f_2}{\partial x}\frac{\partial J}{\partial y}+
\frac{\partial f_2}{\partial y}\frac{\partial J}{\partial x}\right).$$ 
By (\ref{r1}),(\ref{r3}),(\ref{r2}),
 $$\det DF(\bo)=\det \left [
\begin{array}{cc}
-\frac{\partial^2 f_1}{\partial x^2}(\bo)\frac{\partial J}{\partial y}(\bo)+\frac{\partial f_1}{\partial y}(\bo)\frac{\partial^2 J}{\partial x^2}(\bo)&\frac{\partial F_1}{\partial y}(\bo) \\0&-\frac{\partial^2 f_2}{\partial x\partial y}(\bo)\frac{\partial J}{\partial y}(\bo)
\end{array} \right ]$$$$=\frac{\partial J}{\partial y}(\bo)\frac{\partial ^2f_2}{\partial x\partial y}(\bo)\left(\frac{\partial^2 f_1}{\partial x^2}(\bo)\frac{\partial J}{\partial y}(\bo)-\frac{\partial f_1}{\partial y}(\bo)\frac{\partial^2 J}{\partial x^2}(\bo)\right)\neq 0,$$

$$\sgn\det DF(\bo)=\sgn \bigg[-\frac{\partial f_1}{\partial y}(\bo)\left(\frac{\partial^2f_2}{\partial x\partial y}(\bo)\right)^2\left(\frac{\partial^2 f_1}{\partial x^2}(\bo)\frac{\partial J}{\partial y}(\bo)-\frac{\partial f_1}{\partial y}(\bo)\frac{\partial^2 J}{\partial x^2}(\bo)\right)\bigg]$$$$=(-1)\sgn \bigg[\frac{\partial f_1}{\partial y}(\bo)\left(\frac{\partial^2 f_1}{\partial x^2}(\bo)\frac{\partial J}{\partial y}(\bo)-\frac{\partial f_1}{\partial y}(\bo)\frac{\partial^2J}{\partial x^2}(\bo)\right)\bigg].$$

\end{proof}

\begin{lemma}\label{k1}
The local topological degree of the germ $f:(\R^2,\bo)\rightarrow(\R^2,\bo)$ equals $-1$ (resp. $+1$)
if vectors $v_1,v_2$ point in the same (resp. opposite) direction.
\end{lemma}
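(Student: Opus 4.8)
The plan is to reduce $f$, by composition with a single explicit source diffeomorphism, to a one--parameter cubic model whose local degree can be counted by hand, and then to read off the relevant cubic coefficient from the cusp curve $f\circ\psi$ itself, thereby avoiding any third--order differentiation.

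First I would observe that the orthogonal changes in Lemma \ref{zerowePochodne} are rotations, hence orientation preserving, so they alter neither the local degree of $f$ at $\bo$ nor the vectors $v_1,v_2$; thus I may assume the normalizations of Lemma \ref{zerowePochodne} hold. Write $\alpha=\frac{\partial f_1}{\partial y}(\bo)\neq0$ and $\beta=\frac{\partial^2 f_2}{\partial x\partial y}(\bo)\neq0$. Since $\alpha\neq0$, the map $\Phi(x,y)=(x,f_1(x,y))$ is a local diffeomorphism fixing $\bo$ with $\det D\Phi(\bo)=\alpha$, so the equation $f_1=u$ can be solved for $y=Y(x,u)$ and $\tilde g:=f\circ\Phi\inv$ takes the prenormal form $\tilde g(x,u)=(u,P(x,u))$ with $P(x,u)=f_2(x,Y(x,u))$. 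By multiplicativity of the local degree, $\deg_{\bo}f=\sgn(\alpha)\,\deg_{\bo}\tilde g$.

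Next I would expand $P$ at $\bo$. Using $\frac{\partial f_2}{\partial x}(\bo)=\frac{\partial f_2}{\partial y}(\bo)=\frac{\partial^2 f_2}{\partial x^2}(\bo)=0$ together with $\frac{\partial Y}{\partial x}(\bo)=0$, $\frac{\partial Y}{\partial u}(\bo)=1/\alpha$, one checks that $\frac{\partial P}{\partial x}(\bo)=\frac{\partial^2 P}{\partial x^2}(\bo)=\frac{\partial P}{\partial u}(\bo)=0$, while $\frac{\partial^2 P}{\partial x\partial u}(\bo)=\beta/\alpha=:k\neq0$. Putting $6c:=\frac{\partial^3 P}{\partial x^3}(\bo)$, the map $\tilde g$ has cusp $2$--jet $(x,u)\mapsto(u,\,c x^3+k u x)$ in its second coordinate. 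The cusp being $3$--determined (Theorem \ref{Whitney}), a weighted--homogeneous homotopy with $\mathrm{wt}(x)=1$, $\mathrm{wt}(u)=2$ shows that the higher--order terms stay dominated on a small sphere and hence do not change the degree, so $\deg_{\bo}\tilde g$ equals the degree of the model $\tilde g_0(x,u)=(u,c x^3+k u x)$. Counting signed preimages of a regular value in the single--preimage region $cku>0$, where $\det D\tilde g_0=-(3c x^2+k u)$ has constant sign equal to $-\sgn(c)$, gives $\deg_{\bo}\tilde g_0=-\sgn(c)$; in particular $c\neq0$. Hence $\deg_{\bo}f=-\sgn(\alpha c)$.

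Finally I would identify $c$ geometrically, which is the step I expect to carry the weight of the argument. Since $\Phi$ acts only on the source, the target is unchanged and the image of the critical curve, parametrized by $x=t$, is exactly $f(\psi(t))=\bigl(f_1(t,\varphi(t)),f_2(t,\varphi(t))\bigr)$; matching this with the model's cusp curve $\bigl(-3c\,t^2/k,\,-2c\,t^3\bigr)$ yields $A=-6c/k$, where $v_1=(A,0)$ and $A=\frac{d^2}{dt^2}f_1(t,\varphi(t))\big|_{0}$. Combining $A=-6c/k$ with $k=\beta/\alpha$, the identity $\frac{\partial J}{\partial y}(\bo)=-\alpha\beta$, and $v_2=(B,0)$, $B=\alpha\,\frac{\partial J}{\partial y}(\bo)$, the sign bookkeeping collapses to $\sgn(AB)=\sgn(\alpha c)$, so $\deg_{\bo}f=-\sgn(AB)$. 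Thus $\deg_{\bo}f=-1$ exactly when $AB>0$, i.e. when $v_1,v_2$ point in the same direction, and $+1$ when they point in opposite directions, as claimed; this is consistent with Lemma \ref{k2}, where likewise $\sgn\det DF(\bo)=-\sgn(AB)$.
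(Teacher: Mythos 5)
Your argument is correct in substance and takes a genuinely different route from the paper's, modulo one ordering flaw discussed below. The paper uses Theorem \ref{Whitney} qualitatively: it extracts neighbourhoods in which the regular values split into a one--preimage region $V_1$ and a three--preimage region $V_2$, shows via the critical value curve $\gamma(t)=f(t,\varphi(t))$ that $\bar V_2$ approaches $\bo$ along the direction $v_1/|v_1|$, and then tests the curve $\eta(t)=f\bigl(t\tfrac{\partial J}{\partial x}(\bo),t\tfrac{\partial J}{\partial y}(\bo)\bigr)$, tangent to $v_2$: choosing the sign of $t$ that places $\eta(t)$ in $V_1$, the unique preimage is $t\,\nabla J(\bo)$, where $\sgn J=\sgn t$, and the degree is read off there. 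You instead work everything out by hand: the source diffeomorphism $\Phi=(x,f_1)$ gives the prenormal form $(x,u)\mapsto(u,P(x,u))$, a weighted homotopy kills the tail, the cubic model is counted directly, and the coefficient matching $k=\beta/\alpha$, $A=-6c/k$, $B=-\alpha^2\beta$ collapses to $\deg_{\bo}f=-\sgn(\alpha c)=-\sgn(AB)$, which is the assertion of the lemma. I checked your derivative computations, the model count, and the matching (done properly on $\tilde g$ itself: by the implicit function theorem the critical curve $P_x=0$ is $u=-(3c/k)x^2+O(x^3)$, and its graph is $\Phi(S_1(f))$, so $f_1(t,\varphi(t))=-(3c/k)t^2+O(t^3)$); they are all right. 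Both proofs ultimately evaluate the degree at a regular value with a single preimage, but yours is self-contained --- in particular your homotopy makes the appeal to ``$3$--determinacy'' via Theorem \ref{Whitney} unnecessary --- whereas the paper's is shorter and leans on the qualitative picture of the two regions $V_1,V_2$, which it extracts from the normal form without detailed proof.

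The one genuine flaw is the clause ``in particular $c\neq 0$'': you present $c\neq0$ as a consequence of the degree count of the model, but the homotopy and the count both presuppose it --- if $c=0$ the weighted leading part $(u,kux)$ vanishes on the whole line $u=0$, nothing dominates the tail on a small sphere, and the single--preimage region $cku>0$ is empty. As written this is circular. The repair is already contained in your final paragraph and only needs to be moved to the front: the identity $A=-6c/k$ comes from the implicit--function--theorem description of the critical curve of $\tilde g=f\circ\Phi\inv$ and uses no degree theory, while $A\neq0$ because $v_1\neq\bo$, which is the standing cusp hypothesis (via Theorem \ref{oMapie}, established in the paper just before Lemma \ref{k2}). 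Hence $c=-kA/6\neq0$ must be recorded first, and only then may you run the homotopy and count preimages of $\tilde g_0$.
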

\begin{proof}
By Theorem \ref{Whitney} one may conclude that there exists  
open neighbourhoods $U,V$ of the origin in the plane such that
\begin{enumerate}
\item $f(U)=V$,
\item the set of regular values of $f:U\rightarrow V$, i.e. $V\setminus f(S_1(f)\cap U)$, consists of two connected components $V_1,V_2$
 such that
$q\in V_1$ if and only if $ f^{-1}(q)\cap U$ has one element, and
$q\in V_2$ if and only if $f^{-1}(q)\cap U$ has three element,
\item there exists a unit vector $v$ such that for any sequence $(q_n)\subset\bar V_2$ with $\lim q_n=\bo$ and $q_n\neq \bo$ we have
$$\lim\frac{q_n}{|q_n|}=v\ .$$
\end{enumerate}
Hence, if $q$ lies  close to the origin and the scalar product $\langle q,v\rangle$
is negative then $q\in V_1$.

The curve $(t,\varphi(t))$ is a smooth parametrization of $S_1(f)$ near
the origin. There is $\epsilon>0$ such that
 $\gamma(t)=f(t,\varphi(t))\in \bar{V}_2\setminus\{\bo\}$ for $0<|t|<\epsilon$.
By Theorem \ref{oMapie},
$$\frac{d\gamma}{dt}(0)=\bo\ ,\ v_1=\frac{d^2\gamma}{dt^2}(0)\neq\bo\ .$$
There exists smooth  $\alpha:\R,0\rightarrow\R^2$ such that
$\alpha(0)=v_1/2$ and $\gamma(t)=t^2\cdot\alpha(t)$. Then
$$v=\lim\frac{\gamma(t)}{|\gamma(t)|}=\lim\frac{\alpha(t)}{|\alpha(t)|}=\frac{v_1}{|v_1|},$$
so that vectors $v,v_1$ point in the same direction.

Let $\eta(t)=f(\frac{\partial J}{\partial x}(\bo)t,\frac{\partial J}{\partial y}(\bo)t)$. Then $\eta(0)=\bo$,
and
\[\frac{d\eta}{dt}(0)= 
Df(\bo)\cdot \left[\begin{array}{r}\frac{\partial J}{\partial x}(\bo)\\
\frac{\partial J}{\partial y}(\bo)    \end{array}\right]=v_2\neq\bo\ .
\]
There exists a smooth function $\beta:\R,0\rightarrow\R^2$ such that
$\beta(0)=v_2$ and $\eta(t)=t\cdot\beta(t)$.

If vectors $v,v_2$ point in the same direction then the scalar product
$\langle \eta(t),v\rangle $$=t\langle\beta(t),v\rangle$ is negative for all
$t<0$ sufficiently close to the origin. Then  $\eta(t)\in V_1$, so that 
$\eta(t)$ is a regular value and
$f^{-1}(\eta(t))\cap U=(\frac{\partial J}{\partial x}(\bo)t,\frac{\partial J}{\partial y}(\bo)t)$.

In this case  $J(\frac{\partial J}{\partial x}(\bo)t,\frac{\partial J}{\partial y}(\bo)t)$ is  negative,
hence the local topological degree of the mapping
$f:(\R^2,\bo)\rightarrow(\R^2,\bo)$ equals $-1$.

If vectors $v,v_2$ point in the oposite direction then the scalar product
$\langle \eta(t),v\rangle $$=t\langle\beta(t),v\rangle$ is negative for all
$t>0$ sufficiently close to the origin. As before,
$f^{-1}(\eta(t))\cap U=(\frac{\partial J}{\partial x}(\bo)t,\frac{\partial J}{\partial y}(\bo)t)$.

In that case  $J(\frac{\partial J}{\partial x}(\bo)t,\frac{\partial J}{\partial y}(\bo)t)$ is  positive,
hence the local topological degree of the mapping 
$f:(\R^2,\bo)\rightarrow(\R^2,\bo)$ equals $+1$. \end{proof}

\begin{prop}\label{th1}
Assume that $p$ is a cusp point of a mapping $f:\R^2\rightarrow\R^2$. Then $\det DF(p)\neq 0$, and the local
topological degree $\mu(p)$ of the germ $f:(\R^2,p)\rightarrow(\R^2,f(p))$
equals $\sgn\det DF(p)$.
\end{prop}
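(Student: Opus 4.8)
The plan is to assemble the proposition directly from the three coordinate-invariance lemmas together with Lemmas \ref{zerowePochodne}, \ref{k2} and \ref{k1}. Essentially all of the analytic content has already been isolated, so what remains is a careful reduction to the origin and a short case check.

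First I would exploit the invariance of $\det DF$. Since translations in the domain and in the target, and the rotations $L(x,y)=(ax-by,bx+ay)$, $R(x,y)=(cx-dy,dx+cy)$ in the domain and target, all leave $\det DF$ unchanged (by the three lemmas preceding Lemma \ref{zerowePochodne}), I may translate so that $p=\bo$ and $f(\bo)=\bo$ and then apply Lemma \ref{zerowePochodne} to bring $f$ into the normalized form stated there. The hypotheses of that lemma hold at a cusp point: a cusp lies in $S_1(f)$, so $\rank Df(\bo)=1$; Lemma \ref{oneGeneric} gives $dJ(\bo)\neq 0$; and $F(\bo)=\bo$ is precisely condition (\ref{warunek2}) recorded in Lemma \ref{cusp1}. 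I would also note that these rotations and translations are orientation preserving (each has Jacobian determinant $+1$), so the local topological degree $\mu$ is equally undisturbed by the reduction; hence it suffices to establish both assertions at $\bo$ in the normalized coordinates.

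With the normalization in place, Lemma \ref{k2}(i) yields $\det DF(\bo)\neq 0$ at once, and therefore $\det DF(p)\neq 0$. For the sign I would compare Lemma \ref{k2}(ii) with Lemma \ref{k1}, both of which refer to the same pair of nonzero vectors $v_1,v_2$ lying along a common line. If $v_1,v_2$ point in the same direction, then Lemma \ref{k2}(ii) gives $\det DF(\bo)<0$ while Lemma \ref{k1} gives $\mu(\bo)=-1$; if they point in opposite directions, then $\det DF(\bo)>0$ and $\mu(\bo)=+1$. In either case $\mu(\bo)=\sgn\det DF(\bo)$, and since $v_1,v_2$ are both nonzero and parallel these two alternatives are exhaustive.

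I do not anticipate any genuine obstacle: the difficult computations were carried out in Lemmas \ref{zerowePochodne}, \ref{k2} and \ref{k1}, and the proposition is their bookkeeping consequence. The only points deserving care are the verification that the normalizing hypotheses truly hold at a cusp point, and the observation that the reduction to $\bo$ uses exclusively orientation-preserving coordinate changes, so that neither $\det DF$ nor $\mu$ is altered in passing to the origin.
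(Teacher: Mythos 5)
Your proposal is correct and follows essentially the same route as the paper: translate $p$ to the origin, normalize coordinates via Lemma \ref{zerowePochodne}, and combine Lemmas \ref{k2} and \ref{k1}. Your explicit checks that the hypotheses of Lemma \ref{zerowePochodne} hold at a cusp and that the coordinate changes $L$, $R$ (having determinant $a^2+b^2=1$, resp.\ $c^2+d^2=1$) preserve the degree are points the paper leaves implicit, so your write-up is if anything slightly more careful.
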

\begin{proof}A translation, as well as an orthogonal isomorphism,
does not change the local topological degree.
So we may assume that $p=f(p)=\bo$, and choose coordinates satisfying Lemma \ref{zerowePochodne}.
The assertion of the proposition is a consequence of Lemmas \ref{k2} and \ref{k1}. \end{proof}

\section{Polynomial mappings}\label{twierdzenia}
This section is devoted to the problem of determining the number of cusps
of a polynomial mapping
 $f=(f_1,f_2):\R^2\rightarrow\R^2$. Denote by $\Sigma$
the set of cusp points of $f$.
 Let $I$ be the ideal in $\R[x,y]$  generated by
$J$, $F_1$, $F_2$. Let  $I'$ be the one generated by
$J$, $F_1$, $F_2$, $\partial(J,F_1)/\partial(x,y)$, $\partial(J,F_2)/\partial(x,y)$.

\begin{prop}\label{alg1}
If $I'=\R[x,y]$ then $f$ is one-generic and the set of critical points
of $f$ consists of fold points and cusp points.
Moreover, $\Sigma=\{J=0,F_1=0,F_2=0\}$ is finite.
\end{prop}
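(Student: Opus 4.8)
The plan is to establish the three assertions in turn, in each case reading the relevant geometric condition off the earlier lemmas and translating the hypothesis $I'=\R[x,y]$ into the single statement that the five generators of $I'$ have no common real zero.

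First I would prove one-genericity via Lemma \ref{oneGeneric}, arguing by contradiction. Suppose some $p\in J\inv(0)$ had $dJ(p)=\bo$, i.e. $\frac{\partial J}{\partial x}(p)=\frac{\partial J}{\partial y}(p)=0$. From the formula $F_i=-\frac{\partial f_i}{\partial x}\frac{\partial J}{\partial y}+\frac{\partial f_i}{\partial y}\frac{\partial J}{\partial x}$ and from $\partial(J,F_i)/\partial(x,y)=\frac{\partial J}{\partial x}\frac{\partial F_i}{\partial y}-\frac{\partial J}{\partial y}\frac{\partial F_i}{\partial x}$, one sees at once that all five generators $J,F_1,F_2,\partial(J,F_1)/\partial(x,y),\partial(J,F_2)/\partial(x,y)$ vanish at $p$. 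Then $p$ is a common real zero, contradicting $I'=\R[x,y]$. Hence $dJ\neq\bo$ on $J\inv(0)$, and Lemma \ref{oneGeneric} yields one-genericity together with $S_1(f)=J\inv(0)$.

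Next, since $f$ is one-generic the critical set equals $S_1(f)=J\inv(0)$. For $p\in S_1(f)$, Lemma \ref{falda} splits into the case $F(p)\neq\bo$ (a fold) and the case $F(p)=\bo$. In the second case $J,F_1,F_2$ all vanish at $p$; as $I'=\R[x,y]$ forbids a common zero of all five generators, at least one of $\partial(J,F_1)/\partial(x,y)(p)$, $\partial(J,F_2)/\partial(x,y)(p)$ must be nonzero, and Lemma \ref{cusp1} then identifies $p$ as a cusp. So every critical point is a fold or a cusp. The same reasoning gives $\Sigma=\{J=0,F_1=0,F_2=0\}$: the inclusion $\subseteq$ is the definition of a cusp via Lemma \ref{cusp1}, while for $\supseteq$ any $p$ with $J(p)=F_1(p)=F_2(p)=0$ lies in $S_1(f)$ by one-genericity and has $F(p)=\bo$, hence is a cusp by the argument just given.

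The delicate point is finiteness of $\Sigma$. At each $p\in\Sigma$ one of the Jacobians $\partial(J,F_i)/\partial(x,y)(p)$ is nonzero; since this quantity is precisely $\det D(J,F_i)(p)$, the inverse function theorem makes $(J,F_i)$ a local diffeomorphism near $p$, so $(J,F_i)\inv(\bo)=\{p\}$ locally. As $\Sigma\subseteq (J,F_i)\inv(\bo)$, the point $p$ is isolated in $\Sigma$. Thus $\Sigma$ is a discrete real algebraic set, and I would close the argument by recalling that a real algebraic set is semialgebraic and hence has finitely many connected components; being discrete, each component is a single point, so $\Sigma$ is finite. The main obstacle is exactly this last implication: the inverse-function argument only shows $\Sigma$ is discrete, and the passage from \emph{discrete} to \emph{finite} genuinely requires the finiteness of connected components of semialgebraic sets rather than any naive compactness argument.
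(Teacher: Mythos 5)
Your proof is correct and follows essentially the same route as the paper: one-genericity via Lemma \ref{oneGeneric} by noting that all five generators of $I'$ vanish at any point where $J$ and $dJ$ both vanish, the fold/cusp dichotomy via Lemmas \ref{falda} and \ref{cusp1}, and finiteness by showing $\Sigma$ is a discrete algebraic set. Where the paper merely asserts that $\Sigma$ ``is always discrete, and then finite,'' you correctly supply both missing halves --- discreteness via the inverse function theorem applied to $(J,F_i)$ at each cusp, and the passage from discrete to finite via the finiteness of connected components of semialgebraic sets.
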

\begin{proof}One may observe that $I'$ is contained in the ideal
generated by $J$, $\partial J/\partial x$, $\partial J/\partial y$. Therefore the last
ideal also equals $\R[x,y]$, and then its set of zeroes is empty.
Hence, if $J(p)=0$ then either $\partial J/\partial x(p)\neq 0$ or $\partial J/\partial y(p)\neq 0$.
By Lemma \ref{oneGeneric}, $f$ is one-generic.
 
Let $p$ be a critical point, so that $J(p)=0$.
Because the set of zeroes of $I'$ is empty, then either
$F_i(p)\neq 0$ or $\partial(J,F_i)/\partial(x,y)(p)\neq 0$ for some $i$.
By Lemma \ref{falda}, if $F_i(p)\neq 0$ then $p$ is a fold point.

If both $F_1(p)=0$, $F_2(p)=0$ then some $\partial(J,F_i)/\partial(x,y)(p)\neq 0$, and then
$p$ is a cusp point by Lemma \ref{cusp1}.
Thus $\Sigma$ is an algebraic set given by three equations $J=0$, $F_1=0$, $F_2=0$.
On the other hand $\Sigma$ is always discrete, and then  finite. \end{proof}

From now on we shall assume that $I'=\R[x,y]$.
By the  previous proposition, $\Sigma$ equals the set of zeroes of $I$.
Let $\Aa$ denote the $\R$--algebra $\R[x,y]/I$. Assume that $\dim_{\R}\Aa<\infty.$

For $h\in\Aa$, we denote by $T(h)$ the trace of the $\R$--linear endomorphism
$\Aa\ni a\mapsto h\cdot a\in\Aa$. Then $T:\Aa\rightarrow\R$ is a linear functional.
Take $\delta\in\R[x,y]$. Let $\Theta:\Aa\rightarrow\R$ be the quadratic form given by
$\Theta(a)=T(\delta\cdot a^2)$.

 According to \cite{becker}, \cite{pedersenetal}, the signature $\sigma(\Theta)$
of $\Theta$ equals
\begin{equation}\sigma(\Theta)=\sum \sgn\delta(p)\ ,\ \mbox{where}\ p\in\Sigma,\label{trace}\end{equation}
and if  $\Theta$ is non-degenerate then $\delta(p)\neq 0$ for each $p\in\Sigma$. 

Define quadratic forms
$\Theta_1(a)=T(a^2)$,
$\Theta_2(a)=T(\det DF\cdot a^2)$.

\begin{theorem}\label{th2}
Suppose that $I'=\R[x,y]$ and $\dim_R\Aa<\infty$. Then
\begin{itemize}
\item[(i)] $\#\Sigma=\sigma(\Theta_1),$
\item[(ii)] $\sum_{p\in\Sigma}\mu(p)=\sigma(\Theta_2)$,
\item[(iii)] $\#\{ p\in\Sigma\ |\ \mu(p)>0 \}=(\sigma(\Theta_1)+\sigma(\Theta_2))/2,$\\
$\#\{p\in\Sigma\ |\ \mu(p)<0\}=(\sigma(\Theta_1)-\sigma(\Theta_2))/2.$
\end{itemize}
\end{theorem}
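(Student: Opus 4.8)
The plan is to deduce all three statements directly from the trace signature formula (\ref{trace}) together with the geometric identification of $\det DF$ carried out in Proposition \ref{th1}. The genuine work has already been done in the earlier sections, so what remains here is essentially an assembly, and I would present it as such.

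First I would record the two facts that make the argument run. By Proposition \ref{alg1}, the hypothesis $I'=\R[x,y]$ guarantees that $\Sigma=\{J=0,\ F_1=0,\ F_2=0\}$ is precisely the zero set of $I$ and that \emph{every} point of $\Sigma$ is a cusp point of $f$. Hence $\Aa=\R[x,y]/I$ is the coordinate ring of $\Sigma$, and the assumption $\dim_\R\Aa<\infty$ legitimises the trace functional $T$ and the quadratic forms $\Theta_1,\Theta_2$. Moreover, since each $p\in\Sigma$ is a cusp, Proposition \ref{th1} gives $\det DF(p)\neq 0$ and $\mu(p)=\sgn\det DF(p)$, and by the Corollary to Theorem \ref{Whitney} this common value lies in $\{-1,+1\}$.

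For (i) I would apply (\ref{trace}) with $\delta=1$: then $\sgn\delta(p)=1$ for every $p\in\Sigma$, so $\sigma(\Theta_1)=\sum_{p\in\Sigma}1=\#\Sigma$. For (ii) I would apply (\ref{trace}) with $\delta=\det DF$ (viewed as its class in $\Aa$), which gives
$$\sigma(\Theta_2)=\sum_{p\in\Sigma}\sgn\det DF(p)=\sum_{p\in\Sigma}\mu(p),$$
the second equality being exactly Proposition \ref{th1}. For (iii) I would set $N_+=\#\{p\in\Sigma:\mu(p)>0\}$ and $N_-=\#\{p\in\Sigma:\mu(p)<0\}$. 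Because every cusp satisfies $\mu(p)=\pm1$, there are no points with $\mu(p)=0$, so (i) yields $N_++N_-=\#\Sigma=\sigma(\Theta_1)$ while (ii) yields $N_+-N_-=\sigma(\Theta_2)$. Solving this $2\times 2$ linear system gives $N_+=(\sigma(\Theta_1)+\sigma(\Theta_2))/2$ and $N_-=(\sigma(\Theta_1)-\sigma(\Theta_2))/2$, as claimed.

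I do not expect a serious obstacle at this stage: the analytic content — the coincidence of the two definitions of a cusp, the coordinate computation of $\det DF(\bo)$, and the identification $\mu(p)=\sgn\det DF(p)$ — was settled in Sections 2 and 3, and the algebraic input (\ref{trace}) is quoted from \cite{becker,pedersenetal}. The only points one must verify with care are that $\det DF$ descends to a well-defined element of $\Aa$ so that $\Theta_2$ is meaningful (immediate), and that the equality $\mu(p)=\sgn\det DF(p)$ together with $\mu(p)=\pm1$ holds uniformly over \emph{all} of $\Sigma$ — which is exactly what Proposition \ref{alg1} secures by ensuring $\Sigma$ contains only cusp points. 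One may note in passing that $\det DF(p)\neq 0$ on $\Sigma$ forces $\Theta_2$ to be non-degenerate, though this is not needed for the signature count. Thus the theorem is a clean bookkeeping consequence of the preceding results.
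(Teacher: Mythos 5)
Your proof is correct and follows essentially the same route as the paper: Proposition \ref{alg1} identifies $\Sigma$ with the zero set of $I$ and shows every point of it is a cusp, Proposition \ref{th1} gives $\mu(p)=\sgn\det DF(p)\neq 0$, and the trace formula (\ref{trace}) applied with $\delta=1$ and $\delta=\det DF$ yields (i) and (ii), from which (iii) follows by solving the obvious linear system. One caveat about your closing aside: $\det DF(p)\neq 0$ for all $p\in\Sigma$ does \emph{not} by itself force $\Theta_2$ to be non-degenerate, since degeneracy can also be caused by \emph{complex} zeros of $I$ at which $\det DF$ vanishes (the paper's formula (\ref{trace}) only asserts the implication in the other direction) --- but, as you note, this remark is not needed for the argument.
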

\begin{proof}By Propositions \ref{th1}, \ref{alg1}, 
if $p$ is a zero of the ideal $I$ then $p\in\Sigma$ and $DF(p)\neq 0$.

Since $\Theta_1(a)=T(1\cdot a^2)$, by (\ref{trace}) its signature
equals $\sum_{p\in\Sigma}1=\#\Sigma$.
By (\ref{trace}) and Theorem \ref{th1},
the signature of $\Theta_2$ equals
$\sum_{p\in\Sigma}\sgn DF(p)=\sum_{p\in\Sigma}\mu(p)$.
Assertion (iii) is now obvious. \end{proof}
Take $u\in\R[x,y]$. Put $U=\{p\in\R^2\ |\ u(p)>0\}$.
The remainder of this section is devoted to the problem of determining
the number of cusps in $U$. 
Define  quadratic forms 
$\Theta_3(a)=T(u\cdot a^2)$,
$\Theta_4(a)=T(u\cdot \det DF\cdot a^2)$.

\begin{theorem}\label{th3}
Suppose that $I'=\R[x,y]$ and $\dim_R\Aa<\infty$. If $\Theta_3$ is non--degenerate then
\begin{itemize}
\item[(i)] $\Sigma\cap u^{-1}(0)=\emptyset$,
\item[(ii)] $\#\{p\in\Sigma\cap U\ |\ \mu(p)=+1  \}=(\sigma(\Theta_1)+\sigma(\Theta_2)+\sigma(\Theta_3)+\sigma(\Theta_4))/4$,
\item[(iii)] $\#\{p\in\Sigma\cap U\ |\ \mu(p)=-1  \}=(\sigma(\Theta_1)-\sigma(\Theta_2)+\sigma(\Theta_3)-\sigma(\Theta_4))/4$.
\end{itemize}
\end{theorem}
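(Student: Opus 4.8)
The plan is to reduce everything to the trace--signature formula (\ref{trace}) together with Proposition \ref{th1}, and then to solve a small linear system. The only place where the hypothesis on $\Theta_3$ is really used is part (i), and that part is in fact the structural key to the whole argument.

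First I would dispose of (i). Since $\Theta_3(a)=T(u\cdot a^2)$ is the quadratic form associated with $\delta=u$, the non--degeneracy hypothesis together with the stated property of (\ref{trace}) --- that a non--degenerate $\Theta$ forces $\delta(p)\neq 0$ for every $p\in\Sigma$ --- immediately gives $u(p)\neq 0$ for all $p\in\Sigma$, i.e. $\Sigma\cap u^{-1}(0)=\emptyset$. This guarantees that each cusp point satisfies either $u(p)>0$ (so $p\in U$) or $u(p)<0$, with no cusp lying on the boundary $\{u=0\}$; this is exactly what makes the forthcoming partition exhaustive.

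Next I would record the four signatures. By (\ref{trace}) and Proposition \ref{th1}, on $\Sigma$ we have $\det DF(p)\neq 0$ and $\sgn\det DF(p)=\mu(p)\in\{+1,-1\}$, hence
$$\sigma(\Theta_1)=\#\Sigma,\quad \sigma(\Theta_2)=\sum_{p\in\Sigma}\mu(p),\quad \sigma(\Theta_3)=\sum_{p\in\Sigma}\sgn u(p),\quad \sigma(\Theta_4)=\sum_{p\in\Sigma}\sgn u(p)\,\mu(p).$$
Then comes the combinatorial step: I would partition $\Sigma$ according to the pair $(\sgn u(p),\mu(p))$, which by (i) and Proposition \ref{th1} takes one of the four values $(\pm,\pm)$. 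Writing $a,b,c,d$ for the numbers of points in the classes $(+,+),(+,-),(-,+),(-,-)$ respectively, the four displayed sums become $a+b+c+d$, $a-b+c-d$, $a+b-c-d$, and $a-b-c+d$. Forming $\sigma(\Theta_1)+\sigma(\Theta_2)+\sigma(\Theta_3)+\sigma(\Theta_4)=4a$ and $\sigma(\Theta_1)-\sigma(\Theta_2)+\sigma(\Theta_3)-\sigma(\Theta_4)=4b$ then yields exactly (ii) and (iii), since $a=\#\{p\in\Sigma\cap U\ |\ \mu(p)=+1\}$ and $b=\#\{p\in\Sigma\cap U\ |\ \mu(p)=-1\}$.

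There is no genuine analytic obstacle here, as the substantive content was already secured in Theorem \ref{th2} and Proposition \ref{th1}. The single point that truly requires the hypothesis is (i): without the non--degeneracy of $\Theta_3$ a cusp could sit on $u^{-1}(0)$, the value $\sgn u(p)$ would vanish there, and the clean four--class partition --- and hence the exact counting formulas --- would break down. Thus the main thing to be careful about is to invoke the non--degeneracy of $\Theta_3$ before, and not after, setting up the partition.
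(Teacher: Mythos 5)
Your proof is correct and follows essentially the same route as the paper: part (i) from the non--degeneracy of $\Theta_3$ via the trace formula (\ref{trace}), then a partition of $\Sigma$ into four classes according to $(\sgn u(p),\mu(p))$ and solving the resulting $4\times 4$ linear system for the class sizes. Your $a,b,c,d$ are precisely the paper's $a_{00},a_{10},a_{01},a_{11}$, so there is nothing to add.
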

\begin{proof}As in the previous proof, $\det DF(p)\neq 0$ at each $p\in\Sigma$. Since $\Theta_3$ is non--degenerate, by (\ref{trace})
$u(p)\neq 0$ at each $p\in\Sigma$.

For $0\leq i,j\leq 1$ denote
$$a_{ij}=\#\{ p\in\Sigma\ |\ \sgn\det DF(p)=(-1)^i,\ \sgn u(p)=(-1)^j  \}.$$
These numbers satisfy the equations:
$$a_{00}+a_{10}+a_{01}+a_{11}=\sigma(\Theta_1),$$
$$a_{00}-a_{10}+a_{01}-a_{11}=\sigma(\Theta_2),$$
$$a_{00}+a_{10}-a_{01}-a_{11}=\sigma(\Theta_3),$$
$$a_{00}-a_{10}-a_{01}+a_{11}=\sigma(\Theta_4).$$
Now it is easy to verify that $a_{00}=(\sigma(\Theta_1)+\cdots+\sigma(\Theta_4)/4$,
$a_{10}=(\sigma(\Theta_1)-\sigma(\Theta_2)+\sigma(\Theta_3)-\sigma(\Theta_4))/4$.
\end{proof}
%%%%%%%%%%%%%%%%%%%%%%%%%%%%%%%%%%%%%%%%%%%%%%%%%%%%%%%%%%%%%%%%%%%%%%%%%%%%%%%%%%%%%%%%%%%%%%%
\section{Examples}\label{przyklady}
\begin{ex} Let $f=(f_1,f_2)=(xy^2-x^2+y^2+x-y, x-y):\R^2\longrightarrow\R^2$. It is easy to check that $$J=-2xy-y^2+2x-2y,\ F_1=-2xy^2+2y^3-4x^2-2y^2-2x+8y,\ F_2=2x+4y.$$ Using  {\sc Singular} one may verify that $I'=\R[x,y]$. According to Proposition \ref{alg1} the mapping $f$ is one-generic having only folds and cusps as critical points. Moreover the set of cusps $\Sigma$ is finite. The algebra $\Aa=\R[x,y]/I$ is two--dimensional, and has a basis $e_1=y$, $e_2=1$. Put $u=1-x^2-y^2$. The matrices of quadratic forms $\Theta_1,\Theta_2,\Theta_3,\Theta_4$ are $$\left[ \begin{array}{cc}+4&+2\\+2&+2
\end{array}\right ], \left[\begin{array}{cc}-96&-48\\-48&-48
\end{array}\right], \left[\begin{array}{cc}-76&-38\\-38&-18
\end{array}\right],24\cdot\left[\begin{array}{cc}+76&+38\\+38&+18
\end{array}\right].$$
So the quadratic form $\Theta_3$ is non--degenerate and $\sigma(\Theta_1)=2$, $\sigma(\Theta_2)=-2$, $\sigma(\Theta_3)=\sigma(\Theta_4)=0$. According to Theorems \ref{th2} and \ref{th3} the mapping $f$ has two cusps, both of negative sign, one of them lies in $U=\{u>0\}$.

\end{ex}

\begin{ex}
Put $f=(x^2y^3-x^2y+xy^2-x,x^3y-x^2y+y^3+x-y):\R^2\longrightarrow\R^2$ and $u=x^2+y^2-1$. Using the same method as before with the help of {\sc Singular}, one can check that $f$ is one-generic and the dimension of  $\Aa=\R[x,y]/I$ equals $38$. Moreover $f$ has eight cusps, six of them are positive and two are negative. All negative and three positive ones lie in $U=\{u>0\}$.
\end{ex}

\begin{ex}
Let $f=(10x^2y^3+4x^2y^2-2xy^3-6x^2y+8xy^2-5xy,5x^4y+10x^4-y^4+5x^2-3xy-9y)$ and $u=x-1$. In this case  $f$ is one-generic and the dimension of  $\Aa=\R[x,y]/I$ equals $56$. Moreover $f$ has six cusps, five of them are positive and one is negative. The negative one lies in $U=\{u>0\}$.
\end{ex}

\vspace{3cm}

%% \begin{figure}
%% \begin{center}
%% \includegraphics{figure.eps}
%% \caption{Caption}
%% \end{center}
%% \label{area}
%% \end{figure}

%% \begin{thebibliography}{9, 99 or Abc99}
%% \begin{thebibliography}{9}  for 1-digit labels
%% \begin{thebibliography}{99}  for 2-digit labels
%% \begin{thebibliography}{Abc}  for alphanumeric labels

\end{document}